\documentclass[12pt]{amsart}
\textwidth=6.1in
\hoffset=-0.5in
\usepackage[utf8]{inputenc} 
\usepackage[T1]{fontenc}
\usepackage{graphicx}
\usepackage{tikz, pgfplots}
\usepackage{amsfonts}
\usepackage{amssymb,amscd,latexsym}
\usepackage{amsmath}
\usepackage{epsfig}
\usepackage{color}
\usepackage[notref,notcite]{showkeys}

\newtheorem{thm}{Theorem}[section]
\newtheorem{cor}[thm]{Corollary}
\newtheorem{prop}[thm]{Proposition}

\newtheorem{defin}[thm]{Definition}

\newtheorem{rmk}[thm]{Remark}
\newtheorem{question}[thm]{Question}

\newtheorem{ex}[thm]{Example}

\newcommand{\OO}{{\mathcal O}}
\newcommand{\codim}{\operatorname{codim}}

\renewcommand{\Im}{\operatorname{Im}}
\def\p{\mathbb{P}}

\def\C{\mathbb{C}}

\def\P{\mathbb{P}}
\def\rk{\operatorname{rk}}

\def\Hess{\operatorname{Hess}}
\def\hess{\operatorname{hess}}

\newcommand{\Hilb}{\operatorname{Hilb}}

\newcommand{\Ann}{\operatorname{Ann}}

\newcommand{\Mac}{\operatorname{Mac}}

\newcommand{\ba}{\mathcal{B}}

\begin{document}

\title{Higher order Jacobians, Hessians and Milnor algebras}

\author[A. Dimca]{Alexandru Dimca$^1$}
\address{Universit\'e C\^ ote d'Azur, CNRS, LJAD and INRIA, France and Simion Stoilow Institute of Mathematics,
P.O. Box 1-764, RO-014700 Bucharest, Romania}
\email{dimca@unice.fr}

\author[R. Gondim]{Rodrigo Gondim$^2$}
\address{Universidade Federal Rural de Pernambuco, av. Don Manoel de Medeiros s/n, Dois Irmãos - Recife - PE
52171-900, Brasil}
\email{rodrigo.gondim@ufrpe.br}

\author[G. Ilardi]{Giovanna Ilardi}
\address{Dipartimento Matematica Ed Applicazioni ``R. Caccioppoli''
Universit\`{a} Degli Studi Di Napoli ``Federico II'' Via Cintia -
Complesso Universitario Di Monte S. Angelo 80126 - Napoli - Italia}
\email{giovanna.ilardi@unina.it}

\thanks{$^1$ This work has been partially supported by the French government, through the $\rm UCA^{\rm JEDI}$ Investments in the Future project managed by the National Research Agency (ANR) with the reference number ANR-15-IDEX-01 and by the Romanian Ministry of Research and Innovation, CNCS - UEFISCDI, grant PN-III-P4-ID-PCE-2016-0030, within PNCDI III}
\thanks{$^2$ The second author was partially supported by FACEPE ATP ATP-0005-1.01/18, by ICTP IDAN research in pairs 2018-2019 and by IMPA visiting fellowship Summer posdoc 2019.} 

\subjclass[2010]{Primary 13A02, 05E40; Secondary 13D40, 13E10}

\keywords{Jacobian ideal, Hessian, polar mapping, standard graded Artinian Gorenstein algebras, Hilbert functions, Lefschetz properties}

 \begin{abstract} 
We  introduce and study higher order Jacobian ideals,  higher order and mixed  Hessians, higher order polar maps, and higher order Milnor algebras associated to a reduced projective hypersurface. 
We relate these higher order objects to some standard graded Artinian Gorenstein algebras, and we study the corresponding Hilbert functions and Lefschetz properties. 
 
 \end{abstract}

\maketitle

\section{Introduction}

In Algebraic Geometry and Commutative Algebra, the {\it Jacobian ideal} of a homogeneous reduced form $f \in R=\C[x_0,\ldots,x_n]$, denoted by $J(f )= (\frac{\partial f}{\partial x_0}, \ldots, \frac{\partial f}{\partial x_0})$, plays several key roles. 
Let $X=V(f) \subset \P^n$ be the associated hypersurface in the projective space. 
The linear system associated to the Jacobian ideal defines the {\it polar map} $\varphi_X:\P^n \dashrightarrow \P^n$, also called the {\it gradient map}, whose image is the {\it polar image} of $X$, denoted by $Z_X = \overline{\varphi(\P^n)}$. The restriction of the polar map to the hypersurface is the {\it Gauss map} of $X$, $\mathcal{G}_X=\varphi_X|X$, whose image is the {\it dual variety} of $X$. 
The base locus of these maps is the singular scheme of the hypersurface $X$, see \cite{Ru}. The {\it Milnor algebra} of $f$, also called the {\it Jacobian ring} of $f$, is the quotient $M(f )= R/J(f)$. This graded algebra  is closely related to the Hodge filtration on the cohomology of $X$ and the period map, see \cite{DSa,Gr,Se}.    {\it The aim of this paper is to construct higher order versions of these 
classical objects, explicit some relations among them and extend some classical results to this higher order context.} 

In the second section we give some definitions of Artinian Gorenstein algebras, Hessians, Lefschetz properties, Jacobian ideals and Milnor algebras, and review some known results. The {\it standard Artinian Gorenstein algebra} $A(f)$, associated to $f$, is given by Macaulay Matlis duality: the ring $Q = \C[X_0,\ldots,X_n]$ acts on $R$ via the identification $X_i =\frac{\partial }{\partial x_i}$, and we define $A(f )= Q/\Ann(f)$, see \cite{MW}.
Since the Jacobian matrix associated to the polar map is the {\it Hessian matrix} of $f$, see \cite[Chapter 7]{Ru}, one gets that $\varphi_X$ is a dominant map, that is $Z _X = \P^n$, if and only if the {\it Hessian determinant} $\hess_f \ne 0$.  A description of the forms $f$ with $\hess_f = 0$ is given by the Gordan-Noether criterion, and can be found, for example, in \cite{CRS, GR, Go, GRu, Ru}. 
 The new results in the second sections are Proposition \ref{propSLP1} and Proposition \ref{propSLP2},
dealing with the Lefschetz properties of smooth cubic surfaces in $\P^3$ and  smooth quartic curves in $\P^2$.

In the third section we introduced the higher order Jacobian ideals $J^k(f)
$ and the corresponding Milnor algebras $M^k(f)$. The Gauss map $\mathcal{G}$ of a smooth hypersurface is a birational morphism, see for instance \cite{GH,Z}.
The natural $k$-th order version of smoothness is the hypothesis that all the points of $X$ have multiplicity at most $k$. In Theorem \ref{T1} we prove that this condition is equivalent to the $k$-th order Milnor algebra $M^k(f)$ being Artinian, generalizing a classical result for non singular hypersurfaces, and a second order result that can be found in \cite{DSt1}. We also discuss when the Hessian  of the form $f$ belongs to the Jacobian ideal $J(f)$, see Proposition \ref{P1.5} and Question \ref{questionHESS}.

In the fourth section we first show that the $k$-th order Milnor algebra $M^k(f)$ determines the hypersurface $V(f)$ up-to projective equivalence, for a generic $f$ and any
$k\leq d/2-1$, see Theorem \ref{thmZW2}. In the rather long Example \ref{ex3}, we look at quartic curves in $\P^2$, both smooth and singular, and we compute the Hilbert functions for our graded algebras $M^k(f)$ and $A(f)$
as well as the minimal resolutions as a graded $Q$-module for  $A(f)$ in many cases.

In the fifth section,
we construct the $k$-th polar map $\varphi^k_X$ and we prove, in Theorem \ref{thm:polarrankhess}, a higher order version of the Gordan-Noether criterion for the degeneracy of this $k$-th polar map. In this setting, we use the mixed Hessians developed in \cite{GZ}, generalizing higher order Hessians introduced in \cite{MW}. In Corollaries \ref{cor:polar1} and \ref{corHess10}, we give sufficient conditions for the non degeneracy of $\varphi^k_X$ and Theorem \ref{T2} give also some information about the degree  of the $k$-th polar map. 
In \cite{D}, the author showed that the natural higher order related dual map, $\psi^k_X = \varphi^k_X|X$, is a finite map. 
In Theorem \ref{T2}, we assume that the $k$-th Milnor algebra is Artinian to prove that $\varphi^k_X$ is finite. 

\bigskip

We would like to thank the referee for his very careful reading of our manuscript and for his suggestions which greatly improved the presentation of our results.

\section{Preliminaries}

\subsection{Artinian Gorenstein algebras and mixed Hessians}

In this section we give a brief account of Artinian Gorenstein algebras and Macaulay-Matlis duality. 

\begin{defin}\rm Let $R = \C[x_0,\ldots,x_n] $ be a polynomial ring with the usual grading and $I \subset R$ be a homogeneous Artinian ideal and suppose, without loss of generality that $I_1=0$. 
Then the graded Artinian $\C$-algebra $A=R/I = \displaystyle\bigoplus_{i=0}^dA_i$ is  standard, i.e. it is generated in degree $1$ as an algebra. Since $A$ is Artinian, under the hypothesis $I_1=0$, we call $n+1$ the codimension of $A$, by abuse of notation. Setting $h_i(A)=\dim_\C A_i$, the \emph{Hilbert vector} of $A$ is $\Hilb(A)=(1,h_1(A),\dots,h_d(A))$. The Hilbert vector is sometimes conveniently expressed as the \emph{Hilbert function} of $A$, given by the formula
\begin{equation}
\label{HF}
 H(A,t)=\sum_{k=0}^dh_k(A)t^k.
\end{equation}

\end{defin}

The Hilbert vector $\Hilb(A)$ is said to be \emph{unimodal} if there exists an integer $t\ge 1$ such that $1\le h_1(A) \le \dots\le h_t(A)\ge h_{t+1}(A) \ge\dots\ge h_d(A).$ Moreover the Hilbert vector $\Hilb(A)=(1,h_1(A),\dots,h_d(A))$ is said to be \emph{symmetric} if $h_{d-i}(A)=h_i(A)$ for every $i=0,1,\dots,\lfloor\frac{d}{2}\rfloor$. The next Definition is based in a well known equivalence that can be found in \cite[Prop. 2.1]{MW}.
\begin{defin}\rm
A standard graded Artinian algebra $A$ as above is Gorenstein if and only if $h_d(A)= 1$ and the restriction of the multiplication of the algebra in complementary degree, that is $A_k \times A_{d-k} \to A_d$, is a perfect paring for $k =0,1,\ldots,d$, see \cite{MW}. If $A_j=0$ for $j >d$, then $d$ is called the \emph{socle degree} of $A$.
\end{defin}
It follows that the Hilbert vector $\Hilb(A)$ of a graded Artinian Gorenstein $\C-$algebra $A$ is symmetric. The converse is not true, and $\Hilb(A)$ is not always unimodal for $A$ Artinian Gorenstein.

\begin{ex}\rm     
The first example of a non unimodal Hilbert vector $\Hilb(A)$ of a Gorenstein algebra $A$ was given by Stanley in \cite{St}, namely 
$$(1,13,12,13,1).$$
This algebra $A$ has codimension $13$ and socle degree $4$. 
In \cite{BI} we can find the first known example of a non unimodal Gorenstein Hilbert function in codimension $5$, namely
 $$(1,5, 12 , 22 , 35 , 51 , 70, 91 , 90 , 91 , 70 , 51 , 35 , 22 , 12 , 5 , 1).$$
All Gorenstein $h$-vectors are unimodal in codimension $\leq 3$, see \cite{St}. To the best of the authors knowledge,  it is not known if there is a non unimodal Hilbert vector of a Gorenstein algebra in codimension $4$, see \cite{MN1}. 
\end{ex}

Since our approach is algebro-geometric-differential, we recall a differentiable version of the Macaulay-Matlis duality which is equivalent to polarity in characteristic zero.  We denote by $R_d=\C[x_0,\ldots,x_n]_d$ the $\C-$vector space of homogeneous polynomials of degree $d$. We denote by $Q=\C[X_0,\ldots,X_n]$ the ring of differential operators of $R$, where $X_i := \frac{\partial}{\partial x_i}$ for $i=0,\ldots,n.$ We denote by $Q_k=\C[X_0,\ldots,X_n]_k$ the $\C-$vector space of homogeneous differential operators of $R$ of degree $k$.\\ For each 
integer $k$, with $d\geq k\geq 0$ there exist natural $\C-$bilinear maps $R_d\times Q_k \to R_{d-k}$ defined by differentiation: $$(f,\alpha) \to f_\alpha := \alpha(f).$$
Let $f\in R$ be a homogeneous polynomial of degree $\deg f=d\geq 1$, we define  the \emph{annihilator ideal of $f$} by
$$\Ann (f) :=\left\{\alpha\in Q | \alpha(f)=0\right\}\subset Q.$$ 
 Note that $\Ann(f)_1 \ne 0$ if and only if $X = V(f) \subset \P^n$ is a cone, that is, up-to a linear change of coordinates, the polynomial $f$ depends only of $x_1, \ldots,x_n$. {\it We assume from now on that $V(f)$ is not a cone, and hence that $\Ann(f)_1= 0.$}
Since $\Ann(f)$ is a homogeneous ideal of $Q$, we can define $$A(f)=\frac{Q}{\Ann(f)}.$$
 Then $A(f)$ is the standard graded Artinian Gorenstein $\C$-algebra associated to $f$, given by the Macaulay-Matlis duality, and it satisfies
 $$\begin{cases} A(f)_j=0 \mbox{ for } j >d\\ A(f)_d=\C \end{cases}.$$ A proof of this result can be found in \cite[Theorem 2.1]{MW}.

\begin{ex} \label{exFer}\rm
Take  $f_F=x_0^d+...+x_n^d$, the Fermat type polynomial of degree $d$. In this case the ideal $\Ann(f)$ is generated by $X_iX_j$ for $0 \leq i <j \leq n$
and by $X_0^d-X_j^d$ for $j=1,2,...,n$. The graded part of degree $k$ of $A$ is $A_k = \langle X_0^k,X_1^k,\ldots,X_n^k\rangle$ for $k=1,\ldots,d-1$, and $A_j=\langle x_0^j\rangle $ for $j=0$ and $j=d$. This  determines the Hilbert vector $$\Hilb(A(f_F))=(1,n+1, \ldots,n+1,1).$$
\end{ex}

\begin{defin}
Let $A=\displaystyle{\oplus_{i=0}^d}A_i$ be an Artinian graded $\C-$algebra with $A_d\neq 0$. 
\begin{enumerate}
\item The algebra $A$ is said to have the Weak Lefschetz property, briefly WLP, if there exists an element $L\in A_1$ such that the multiplication map $\bullet L: A_i\to A_{i+1}$ is of maximal rank for $0\leq i \leq d-1$.
\item The algebra $A$ is said to have the Strong Lefschetz property, briefly SLP, if there exists an element $L\in A_1$ such that the multiplication map $L^k: A_i\to A_{i+k}$ is of maximal rank for $0\leq i\leq d$ and $0\leq k\leq d-i$.
\item  We say that $A$ has the Strong Lefschetz property in the narrow sense, if there is $L \in A_1$ such that the linear map $\bullet L^{d-2k}: A_k \to A_{d-k}$ is an isomorphism for all $k\leq d/2$. 
\end{enumerate}
\end{defin}

\begin{rmk}
In the case of standard graded Artinian Gorenstein algebra the two conditions SLP and SLP in the narrow sense are equivalent. 
\end{rmk}
\begin{ex}\label{ex:monomial}\rm This example is due to Stanley  \cite{St} and  Watanabe  \cite{Wa3}. It is considered to be the starting point of the research area of Lefschetz properties for graded algebras. Nowadays there are lots of different proofs for it. 
Consider the graded Artinian Gorenstein algebra
 $$A = \frac{\C[X_0,\ldots,X_n]}{(X_0^{a_0},\ldots,X_n^{a_n})} = \frac{\C[X_0]}{(X_0^{a_0})}\otimes \ldots \otimes \frac{\C[X_n]}{(X_n^{a_n})},$$
  with integers $a_i>0$ for all $i=0,\ldots,n$. 
 It is a monomial complete intersection. Since the cohomology of the complex projective space is $H^*(\P^m,\C)=\C[x]/(x^{m+1})$, and the Segre product commutes with the tensor product by K\"unneth Theorem for cohomology, we have:
 $$H^*(\p^{a_0-1}\times \ldots \times \p^{a_n-1},\C)=\frac{\C[X_0,\ldots,X_n]}{(X_0^{a_0},\ldots,X_n^{a_n})}. $$
 By the Hard Lefschetz Theorem applied to the smooth projective variety $(\p^{a_0-1}\times \ldots \times \p^{a_n-1}$, we know that $A$ has the SLP. 
\end{ex}
The standard graded Artinian Gorenstein algebra $A(f)$ associated to a form $f$ is a natural model for the cohomology algebras of spaces in several categories. For smooth projective varieties, the Hard Lefschetz theorem inspired what is now called Lefschetz properties for 
the algebra $A(f)$. As we show below, the geometric properties of the higher order objects introduced in this paper are intrinsicly connected with such  Lefschetz properties, see Theorem \ref{thm:polarrankhess}.

\subsection{Hessians and  Lefschetz properties}

We recall the following classical results involving the usual Hessian. 
Cones are trivial forms with vanishing Hessian and are characterized by the fact that $Z_X\subset H = \P^{n-1} \subset \P^n$ is a degenerate variety. Hesse claimed in \cite{He} that a reduced hypersurface has vanishing Hessian if and only if it is a cone. Gordan-Noether proved that the claim is true for $n \leq 3$
and false for $n \geq 4$ and this is part of the so called Gordan-Noether theory, see \cite{GN, CRS,Wa3, GR, Go, Ru}.
More precisely, let $f \in R_d$ be a reduced form and let $X = V(f) \subset \P^n $ be the associated hypersurface. Consider the polar map associated to $f$:  
$$
\varphi_X:\p^n\dasharrow(\p^n)^{*}.$$
It is also called the gradient map of $X=V(f)\subset \p^n$, and it is defined by
$$\varphi_X(p)=(f_{x_0}(p):\cdots :f_{x_n}(p)),$$
where $f_{x_i}= \frac{\partial f}{\partial x_i}$.
The image $Z=Z_X$ of $\p^n$ under the polar map $\varphi_X$ is called the polar image of $X$. 
\begin{prop} \label{prop:GNcriteria} \cite{GN} Let $f\in \C[x_0,\ldots,x_n]$ be a reduced polynomial and consider $X  = V(f) \subset \P^n$. Then

\begin{enumerate}
 \item[(i)] $X$ is a cone if and only if $Z\subset H = \p^{n-1}$ is degenerated, which is equivalent to say that  $ f_{x_0},\ldots,f_{x_n} $ are linearly dependent; 
 \item[(ii)] $\hess_f=0$ if and only if $Z \subsetneq \p^n$, or equivalently $f_{x_0},\ldots,f_{x_n}$ are algebraically dependent.
\end{enumerate}

\end{prop}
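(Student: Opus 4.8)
The plan is to prove each statement as a chain of equivalences, handling (i) by elementary linear algebra and (ii) through the Jacobian criterion for algebraic independence, with characteristic zero entering at exactly one point.

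For (i), I would first note that $Z$ is degenerate, i.e.\ contained in some hyperplane $\{\sum_i c_i Y_i = 0\}$ with $c=(c_0,\ldots,c_n)\neq 0$, precisely when $\sum_i c_i f_{x_i}(p)=0$ for all $p$, that is when the polynomial identity $\sum_i c_i f_{x_i}=0$ holds; this is exactly linear dependence of $f_{x_0},\ldots,f_{x_n}$. It then remains to match this with the cone condition. Such a relation says that the directional derivative of $f$ along the constant field $c$ vanishes, so $f(x+tc)=f(x)$ for all $t$; choosing linear coordinates with $c$ as first basis vector makes $f$ independent of that coordinate, so $V(f)$ is a cone with vertex $[c]$. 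Conversely a cone is, after a linear coordinate change, independent of one variable, hence has a vanishing partial there; since a linear change acts linearly on the vector $(f_{x_0},\ldots,f_{x_n})$, the resulting dependence descends to the original coordinates.

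For (ii), the key point is that the Jacobian matrix of the gradient map $(f_{x_0},\ldots,f_{x_n})$ is the Hessian matrix $(f_{x_i x_j})$, so its determinant is $\hess_f$. The Jacobian criterion over a field of characteristic zero then says that $n+1$ polynomials in $n+1$ variables are algebraically independent over $\C$ if and only if this determinant is a nonzero polynomial; applied here, $\hess_f\neq 0 \iff f_{x_0},\ldots,f_{x_n}$ are algebraically independent. Independence is in turn equivalent to the affine gradient map $\C^{n+1}\to\C^{n+1}$ being dominant, which is the same as $Z=\p^n$ (as already recorded in the preliminary discussion). Taking complements gives $\hess_f=0 \iff f_{x_i}$ algebraically dependent $\iff Z\subsetneq\p^n$.

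The main obstacle --- and the sole place where the hypothesis $\C$ is indispensable --- is the nontrivial direction of the Jacobian criterion: that a vanishing Hessian forces an algebraic relation among the $f_{x_i}$. The reverse implication (a relation forces $\hess_f=0$) comes from differentiating the relation and is characteristic-free, but the forward implication rests on separability of the relevant function-field extension and genuinely fails in positive characteristic, where inseparable dominant maps have everywhere-degenerate differential. I would therefore treat this criterion as the technical heart of (ii), either citing it or unwinding it through generic smoothness; the remaining steps are routine bookkeeping with linear forms and transcendence degrees.
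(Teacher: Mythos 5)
Your argument is correct. There is, however, nothing in the paper to compare it against: the proposition is stated as a classical result of Gordan--Noether and is only cited, via \cite{GN}, with no in-text proof. Your chain of equivalences is the standard one and it works. For (i), degeneracy of $Z$ unwinds to the polynomial identity $\sum_i c_i f_{x_i}=0$ by density of the image of the polar map, and integrating the constant vector field $c$ correctly converts that identity into $f(x+tc)=f(x)$, hence into the cone condition in the sense the paper uses (after a linear change, $f$ omits one variable); the converse via the linear action of coordinate changes on the gradient is also fine. For (ii), you correctly identify the Hessian as the Jacobian of the gradient map and reduce everything to the Jacobian criterion for algebraic independence over a field of characteristic zero, and you rightly single out the direction ``$\hess_f=0$ implies an algebraic relation among the $f_{x_i}$'' as the technical heart and the one place where characteristic zero is indispensable --- that is precisely the content supplied by the cited sources, so quoting it is legitimate. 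The only step worth making fully explicit is the passage between the affine gradient map $\C^{n+1}\to\C^{n+1}$ and the projective polar map: since the $f_{x_i}$ are homogeneous of the same degree, the affine image closure is the cone over $Z$, so dominance of the affine map is equivalent to $Z=\p^n$; conversely, if $Z\subsetneq\p^n$ then $Z$, being irreducible of dimension $<n$, lies on a hypersurface $V(h)$ and $h(f_{x_0},\dots,f_{x_n})=0$ gives the algebraic relation. With that bookkeeping spelled out, your proof is complete modulo the quoted Jacobian criterion.
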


\begin{thm}\label{thm:GN} \cite{GN} Let $X = V(f) \subset \P^n$, $n \leq 3$, be a hypersurface such that $\hess_f=0$. Then $X$ is a cone.
\end{thm}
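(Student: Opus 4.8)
The plan is to deduce the statement directly from Proposition~\ref{prop:GNcriteria}. By part (ii), the hypothesis $\hess_f=0$ is equivalent to $Z=Z_X$ being a proper subvariety of $\P^n$, say $r:=\dim Z\le n-1$, while by part (i) the desired conclusion ``$X$ is a cone'' is equivalent to $Z$ being \emph{degenerate}, i.e. contained in a hyperplane. Thus for $n\le 3$ I must upgrade \emph{algebraic} dependence of $f_{x_0},\dots,f_{x_n}$ to \emph{linear} dependence. Throughout I set $m:=n-r\ge 1$, the expected dimension of a general fibre of the polar map $\varphi_X$.

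The engine is the Gordan--Noether \emph{fundamental lemma}: the general fibre of $\varphi_X$ is a linear subspace $\Pi\cong\P^m$ of $\P^n$, along which the gradient direction $[\nabla f]$ is (by definition of fibre) constant. I would prove this by analysing the differential of $\varphi_X$, which is the Hessian matrix of $f$: the tangent space to a fibre at a general point $p$ is the kernel $K(p)$ of $\Hess_f(p)$, and the Euler relation $\Hess_f(p)\cdot p=(d-1)\nabla f(p)$ pins down how $K(p)$ sits relative to $p$. The crucial point is that the symmetry of the Hessian forces the distribution $p\mapsto K(p)$ to be integrable with \emph{linear} leaves; establishing this integrability is the first main obstacle, and it is the heart of the classical theory.

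Granting linearity of the fibres, I would run an incidence argument, valid whenever $2m\ge n$. Two general fibres are linear spaces of dimension $m$, hence meet in $\P^n$ in dimension $2m-n\ge 0$; but a point lying on two distinct fibres would have two distinct gradient directions, so it must lie in the base locus $\Bs(\varphi_X)=\Sing(X)$. Since $X$ is reduced, $\dim\Sing(X)\le n-2$, and the resulting constraint on an $r$-dimensional family of mutually intersecting linear spaces forces all fibres to pass through a common linear subspace --- a vertex --- so that $X$ is a cone. For $n=2$ the inequality $2m\ge n$ holds in every case (as $r\le 1$), and for $n=3$ it disposes of $r=0$ (then $Z$ is a point and the $f_{x_i}$ are proportional) and $r=1$ (fibres are planes meeting in lines that must lie in the at most one-dimensional $\Sing X$).

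This leaves exactly the case $n=3$, $r=2$, $m=1$, where the general fibres are \emph{skew} lines, the intersection argument collapses, and genuinely new input is needed; this is the second and decisive obstacle, and it is the reason the theorem is sharp, since $\P^4$ admits the non-conical Perazzo cubics with line fibres. Here I would exploit the finer Gordan--Noether analysis: constancy of $[\nabla f]$ along each line, combined with the symmetry of $\Hess_f$, forces the polar image $Z\subset\P^3$ to be a surface with \emph{degenerate Gauss map}. By the classical classification such a surface is either degenerate, a cone, or the tangent developable of a space curve; I would then trace each possibility back through $\varphi_X$, excluding the developable case and concluding in the remaining cases that $f_{x_0},\dots,f_{x_3}$ are linearly dependent, i.e. $X$ is a cone. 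The entire difficulty of the theorem is concentrated in this line-fibre case, which is precisely where the hypothesis $n\le 3$ is indispensable.
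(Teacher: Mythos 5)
The paper itself offers no proof of Theorem \ref{thm:GN}: it is the classical Gordan--Noether theorem, recalled with the citation \cite{GN}, so your attempt can only be measured against the known proofs in the literature (\cite{GN}, and the modern treatments in \cite{GR}, \cite{CRS}, \cite{Ru}). Your framing is the correct one --- by Proposition \ref{prop:GNcriteria} the task is exactly to upgrade algebraic dependence of $f_{x_0},\dots,f_{x_n}$ to linear dependence when $n\le 3$, and the case division by $r=\dim Z$ matches the classical analysis.

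As written, however, this is a roadmap rather than a proof: the two places you yourself identify as ``the heart of the theory'' are precisely where no argument is given. (1) The claim that the closure of the general fibre of $\varphi_X$ is a linear $\P^m$ is asserted, not proved; what the Gordan--Noether fundamental lemma actually delivers is a rational map $h=(\pi_{y_0}(\nabla f):\dots:\pi_{y_n}(\nabla f))$, built from an algebraic relation $\pi$ among the partials, whose value at a general $p$ lies in $\ker\Hess_f(p)$ and satisfies $\nabla f(p+th(p))=\nabla f(p)$ identically in $t$. Extracting linearity of the fibres (or enough lines inside them) from this is the substantive step you skip. (2) The incidence argument is too loose: a family of pairwise-meeting linear spaces need not share a common point (all lines in a fixed plane pairwise meet), so ``mutually intersecting $\Rightarrow$ common vertex'' requires the classical dichotomy and the elimination of the degenerate branch; and even granting a common vertex $v$, concluding that $X$ is a cone still requires showing $\partial_v f=0$, e.g.\ by expanding $f(p+tv)$ in $t$ and using constancy of $[\nabla f]$ along the fibre. (3) The decisive case $n=3$, $\dim Z=2$ is only named: one must actually prove that $Z$ has degenerate Gauss map, invoke the classification (plane, cone, tangent developable), and exclude the developable branch. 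Nothing here is wrong in spirit --- it is essentially the Gordan--Noether/Franchetta argument as modernized in \cite{GR} and \cite{CRS} --- but each of these is a genuine lemma that your text asserts rather than establishes, so the proposal does not yet constitute a proof.
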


\begin{thm}\label{thm:GN2} \cite{GN} For each $n \geq 4$ and $d \geq 3$ there exist irreducible hypersurfaces $X = V(f) \subset \P^n$, 
of degree $\deg(f) = d$, not cones, such that $\hess_f=0$.  
\end{thm}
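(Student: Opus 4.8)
The plan is to reduce the statement to a question about algebraic dependence of partial derivatives, and then to produce explicit examples by a construction going back to Perazzo, propagating from the minimal dimension $n=4$ to all larger $n$ by a suspension. By Proposition~\ref{prop:GNcriteria}, for a reduced form $f$ the condition $\hess_f=0$ is equivalent to the algebraic dependence of $f_{x_0},\ldots,f_{x_n}$, while $X=V(f)$ fails to be a cone precisely when these same partials are \emph{linearly} independent. Thus the whole task is to exhibit, for each $n\geq 4$ and $d\geq 3$, a reduced (indeed irreducible) form $f$ of degree $d$ whose partial derivatives are algebraically but not linearly dependent; the point is that enough variables allow one to decouple these two conditions.

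For the base case $n=4$ I would take the Perazzo form
\begin{equation*}
f=x_0g_0(x_3,x_4)+x_1g_1(x_3,x_4)+x_2g_2(x_3,x_4),
\end{equation*}
where $g_0,g_1,g_2$ are linearly independent binary forms of degree $d-1$ (these exist since $\dim_\C\C[x_3,x_4]_{d-1}=d\geq 3$). The first three partials $f_{x_0}=g_0$, $f_{x_1}=g_1$, $f_{x_2}=g_2$ are three elements of the ring $\C[x_3,x_4]$ of transcendence degree $2$, hence algebraically dependent, and so is the full system $f_{x_0},\ldots,f_{x_4}$. By Proposition~\ref{prop:GNcriteria}(ii) this forces $\hess_f=0$. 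On the other hand $f_{x_3},f_{x_4}$ involve $x_0,x_1,x_2$ linearly while $f_{x_0},f_{x_1},f_{x_2}$ do not, so a linear relation among all five partials splits into a relation among $g_0,g_1,g_2$ together with a constant-coefficient directional derivative in $x_3,x_4$ annihilating all three $g_i$. The former is excluded by our choice; the latter is impossible because the kernel of a nonzero directional derivative on binary $(d-1)$-forms is one-dimensional, so three linearly independent forms cannot share an annihilating direction. Hence the partials are linearly independent and, by Proposition~\ref{prop:GNcriteria}(i), $X=V(f)$ is not a cone.

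To pass from $\P^n$ to $\P^{n+1}$ I would use the suspension $g=f+x_{n+1}^d$. Its partials are $f_{x_0},\ldots,f_{x_n}$ together with $d\,x_{n+1}^{d-1}$; since the subsystem of partials of $f$ is already algebraically dependent, the whole system is too, and $\hess_g=0$ by Proposition~\ref{prop:GNcriteria}(ii). Moreover $d\,x_{n+1}^{d-1}$ is the only partial involving $x_{n+1}$, so it cannot enter a linear relation with the others; as the partials of $f$ are linearly independent, so are those of $g$, and $g$ defines no cone. Iterating this step from the $n=4$ example produces, for every $n\geq 4$ and every $d\geq 3$, a non-cone hypersurface of degree $d$ with vanishing Hessian; in particular it settles the case $(n,d)=(5,3)$, which a naive Perazzo split in six variables cannot realize.

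It remains to guarantee irreducibility, which is the one point requiring care rather than a formal check. For a generic choice of the $g_i$ the base Perazzo form is irreducible, and $x_{n+1}^d+f$ is irreducible over $\C(x_0,\ldots,x_n)$ whenever $-f$ is not a proper power, which again holds generically; an irreducible form is automatically reduced. The conceptual heart of the argument — and the reason no such examples exist for $n\leq 3$ (Theorem~\ref{thm:GN}) — is exactly the tension between the two criteria of Proposition~\ref{prop:GNcriteria}: one must create an \emph{algebraic} relation among the gradient components \emph{without} creating a linear one, and the Perazzo construction is the minimal device doing so, by confining several partials to a ring of small transcendence degree while keeping them linearly independent. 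I expect this decoupling, rather than any individual computation, to be the main obstacle.
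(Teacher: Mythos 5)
Your argument is correct and complete. Note, however, that the paper itself gives no proof of Theorem~\ref{thm:GN2}: it is recalled as a classical fact with a citation to Gordan--Noether, and explicit constructions are pointed to in the surrounding references (e.g.\ \cite{CRS,GRu,Go,Ru}). What you have written is essentially the standard Perazzo construction that those references use: confine $f_{x_0},f_{x_1},f_{x_2}$ to the ring $\C[x_3,x_4]$ of transcendence degree $2$ to force algebraic dependence (hence $\hess_f=0$ by Proposition~\ref{prop:GNcriteria}(ii)) while keeping all partials linearly independent (so $V(f)$ is not a cone by part (i)), and then suspend by $x_{n+1}^d$ to increase $n$. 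All the individual steps check out: the kernel of a nonzero directional derivative on binary forms of degree $d-1$ is indeed one-dimensional, so three linearly independent $g_i$ cannot be simultaneously annihilated; the suspension adds a partial $d\,x_{n+1}^{d-1}$ that cannot enter a linear relation and cannot destroy algebraic dependence; and the block structure of the Hessian even gives $\hess_{f+x_{n+1}^d}=\hess_f\cdot d(d-1)x_{n+1}^{d-2}=0$ directly, if you prefer to avoid invoking part (ii) twice. Two small points could be tightened. First, irreducibility of the base form need not be left to genericity: $x_0g_0+x_1g_1+x_2g_2$ is linear in $(x_0,x_1,x_2)$, so any factorization has one factor in $\C[x_3,x_4]$ dividing every $g_i$; hence irreducibility is exactly $\gcd(g_0,g_1,g_2)=1$, achieved explicitly by, say, $g_0=x_3^{d-1}$, $g_1=x_4^{d-1}$, $g_2=x_3^{d-2}x_4$. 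Second, the irreducibility of $x_{n+1}^d+f$ follows unconditionally (not just generically) once $f$ is irreducible of degree $d\ge 2$: the criterion for $T^d-a$ requires $-f\notin K^p$ for primes $p\mid d$ and, when $4\mid d$, $-f\notin -4K^4$, and both are ruled out because a $p$-th power (or $4w^4$) in $\C(x_0,\dots,x_n)$ that equals a polynomial up to sign is a polynomial power, contradicting irreducibility of $f$.
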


Now we recall a generalization of a construction that can be found in \cite{MW}. Set $A=A(f)$,  let $k\le l$ be two integers, take $L\in A_1$ and let us consider the linear map
    $$\bullet L^{l-k}: A_k\to A_l.$$
Let $\ba_k=(\alpha_1,\ldots,\alpha_r)$ be a basis of the vector space $A_k$, and 
$\ba_l=(\beta_1,\ldots,\beta_s)$ be a basis of the vector space $A_l.$

\begin{defin}
\label{defMH}
We call mixed Hessian of $f$ of mixed order $(k,l)$ with respect to the basis $\ba_k$ and $\ba_l$ the matrix: 
  $$\Hess_f^{(k,l)}:=[ \alpha_i\beta_j(f)]$$
Moreover, we define $\Hess_f^k=\Hess_f^{(k,k)}$, $\hess_f^k = \det(\Hess_f^k)$ and $\hess_f=\hess_f^1$.
  \end{defin}
Note that $A(f)_1=Q_1$ by our assumption, which implies that $\Hess_f=\Hess_f^1$ is the usual Hessian 
matrix of the polynomial $f$ and $\hess_f$ is the usual  Hessian of $f$. 
Since $A$ is Gorenstein, there is an isomorphism $A_k^* \simeq A_{d-k} $. Therefore, given the basis
$\ba_k=(\alpha_1,\ldots,\alpha_r)$ of $A_k$ and a basis $\theta$ of $A_d \simeq \C$, we get the dual basis
 $\ba^*_k=(\beta^*_1,\ldots,\beta^*_s),$ of $A_{d-k}$ in the following way
  $$\beta^*_i\beta_j(f)=\delta_{ij}\theta.$$

\begin{defin}
We call dual mixed Hessian matrix the matrix 
  $$\Hess^{(k^*,l)}(f):=[(\beta_i^*)\alpha_j(f)]$$
\end{defin}

Note that $\rk \Hess^{(k^*,l)} = \rk \Hess^{(d-k,l)}$.

If $L=a_0X_0+\ldots+a_nX_n \in Q_1,$ 
we set $L^{\perp}=(a_0,\ldots,a_n) \in \C^{n+1}.$ 

The next result can be found in \cite{GZ2} and it is a generalization of the main result of \cite{MW}.

\begin{thm}\cite{GZ2} \label{thm:generalization}
With the previous notation, let $M$ be the matrix associated to the map $\bullet L^{l-k}:A_k \to A_l$ with respect 
to the bases $\ba_k$ and $\ba_l.$ Then
    $$M=(l-k)!\Hess^{(l^*,k)}(f)(L^{\perp}).$$
\end{thm}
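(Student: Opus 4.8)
The plan is to compute a single matrix entry of $M$ and recognise it as the corresponding entry of $(l-k)!\,\Hess^{(l^*,k)}(f)(L^\perp)$. Fix the bases $\ba_k=(\alpha_1,\ldots,\alpha_r)$ and $\ba_l=(\beta_1,\ldots,\beta_s)$, and recall that the Gorenstein property supplies a perfect pairing in complementary degrees $A_{d-l}\times A_l\to\C$, $(\gamma,\delta)\mapsto(\gamma\delta)(f)$, for which $\ba_l^*=(\beta_1^*,\ldots,\beta_s^*)$ is the basis of $A_{d-l}$ dual to $\ba_l$, normalized so that $(\beta_i^*\beta_j)(f)=\delta_{ij}$. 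Writing the image of the $j$-th basis vector as $L^{l-k}\alpha_j=\sum_{p}M_{pj}\beta_p$ in $A_l$, I would read off the coefficient $M_{ij}$ by pairing against $\beta_i^*$:
\begin{equation}
(\beta_i^*\,L^{l-k}\alpha_j)(f)=\sum_{p}M_{pj}\,(\beta_i^*\beta_p)(f)=M_{ij}.
\end{equation}
The first step is therefore the identity $M_{ij}=(\beta_i^*L^{l-k}\alpha_j)(f)$. A small point to verify here is that this number is independent of the chosen representatives in $Q$ of the classes $\beta_i^*,\alpha_j\in A$; this holds because $Q$ is commutative and $\Ann(f)$ is an ideal, so any term coming from $\Ann(f)$ can be commuted past the other operators onto $f$ and is annihilated.

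Next I would use that $Q=\C[X_0,\ldots,X_n]$ is commutative, so as differential operators $\beta_i^*L^{l-k}\alpha_j=L^{l-k}(\beta_i^*\alpha_j)$, whence
\begin{equation}
(\beta_i^*L^{l-k}\alpha_j)(f)=L^{l-k}\bigl(g_{ij}\bigr),\qquad g_{ij}:=(\beta_i^*\alpha_j)(f)\in R_{l-k},
\end{equation}
and $g_{ij}$ is exactly the $(i,j)$ entry of the dual mixed Hessian $\Hess^{(l^*,k)}(f)$. In this way the whole statement collapses to a single computational identity: for a form $g\in R_m$ of degree $m=l-k$ and the constant-coefficient operator $L=a_0X_0+\cdots+a_nX_n=\sum_i a_i\,\partial/\partial x_i$, one has
\begin{equation}
L^{m}(g)=m!\;g(a_0,\ldots,a_n)=m!\;g(L^\perp).
\end{equation}
I expect this identity to be the main (and essentially only substantive) obstacle; everything surrounding it is bookkeeping with the Gorenstein pairing.

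To establish it I would read $L$ as the directional derivative in the direction $L^\perp$ and expand $g$ by Taylor's formula along that direction:
\begin{equation}
g(x+tL^\perp)=\sum_{j=0}^{m}\frac{t^j}{j!}\,\bigl(L^{j}g\bigr)(x).
\end{equation}
Setting $x=0$, the left-hand side equals $t^m g(L^\perp)$ by homogeneity, while on the right every term with $j<m$ vanishes at the origin (being homogeneous of positive degree $m-j$) and the term $j=m$ contributes the constant $L^m g$; comparing coefficients of $t^m$ gives $L^m g=m!\,g(L^\perp)$. Substituting $g=g_{ij}$ into the reduction above yields $M_{ij}=(l-k)!\,g_{ij}(L^\perp)$ for all $i,j$, which is precisely the claimed matrix identity $M=(l-k)!\,\Hess^{(l^*,k)}(f)(L^\perp)$. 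The only remaining care is the normalization of the dual basis, i.e. the constant $\theta$ in $(\beta_i^*\beta_j)(f)=\delta_{ij}\theta$, which I take to be $\theta=1$ so that no spurious scalar survives.
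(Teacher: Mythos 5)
The paper does not prove this statement --- it is imported from \cite{GZ2} (and generalizes \cite{MW}) --- so there is no in-paper argument to compare against; judged on its own, your proof is correct and is essentially the standard one from those references: read off $M_{ij}$ via the Gorenstein pairing with the dual basis $\beta_i^*$, use commutativity of $Q$ to reduce to the entries $g_{ij}=(\beta_i^*\alpha_j)(f)\in R_{l-k}$, and apply the identity $L^{m}(g)=m!\,g(L^{\perp})$ for $g$ homogeneous of degree $m$, which your Taylor-expansion argument establishes correctly. Your closing remark about the normalization is the right one to make: the paper's dual basis is only pinned down up to the choice of $\theta$ in $(\beta_i^*\beta_j)(f)=\delta_{ij}\theta$, and the stated formula holds verbatim exactly when $\theta=1$ (otherwise a factor $\theta^{-1}$ appears), so no gap remains.
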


\begin{cor} \label{cor1}
For a generic $L$, one has the following.
\begin{enumerate}
\item The map $\bullet L^{d}:A_0 \to A_d$ is an isomorphism.
\item The map $\bullet L^{d-2}:A_1 \to A_{d-1}$ is an isomorphism if and only if $\hess_f \ne 0$.
\item If $d=2k$ is even, then $\hess_f^k \neq 0$.
\item $A$ has the SLP if and only if $\hess^k_f \neq 0$ for all $k\leq d/2$.
\end{enumerate}
\end{cor}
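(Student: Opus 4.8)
The plan is to deduce all four statements from Theorem~\ref{thm:generalization} together with the rank identity $\rk \Hess^{(k^*,l)} = \rk \Hess^{(d-k,l)}$ recorded above, the common mechanism being the translation of the invertibility of a power-multiplication map into the non-vanishing of a Hessian determinant. Fix $k\le d/2$ and put $l=d-k$; by Gorenstein symmetry $\dim_\C A_k=\dim_\C A_{d-k}$, so the matrix $M_k$ of $\bullet L^{d-2k}\colon A_k\to A_{d-k}$ is square of size $h_k(A)$. Theorem~\ref{thm:generalization} gives $M_k=(d-2k)!\,\Hess^{((d-k)^*,k)}(f)(L^{\perp})$, hence $\bullet L^{d-2k}$ is an isomorphism if and only if $\det \Hess^{((d-k)^*,k)}(f)(L^{\perp})\ne 0$. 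Applying the rank identity with starred index $d-k$ and second index $k$ yields $\rk \Hess^{((d-k)^*,k)}=\rk \Hess^{(k,k)}=\rk \Hess_f^k$, so the polynomial $\det \Hess^{((d-k)^*,k)}(f)$ is not identically zero exactly when $\hess_f^k=\det \Hess_f^k\ne 0$. This equivalence is the bridge on which everything rests: it replaces the matrix of Theorem~\ref{thm:generalization}, built from the abstract Gorenstein duality, by the concrete mixed Hessian carrying the invariant $\hess_f^k$.

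Parts (1)--(3) are then short. For (1), $A_0\cong A_d\cong\C$ and $\bullet L^d$ sends $1$ to the class of $L^d$, which vanishes in $A_d$ iff $L^d(f)=0$; a direct expansion gives $L^d(f)=d!\,f(L^{\perp})$, a nonzero polynomial in the coefficients of $L$ because $f\ne 0$, so the map is an isomorphism whenever $L^{\perp}\notin X$, in particular for generic $L$. For (2), take $k=1$: since $\Ann(f)_1=0$ we have $A_1=Q_1$, the matrix $\Hess_f^1$ is the ordinary Hessian and $\hess_f^1=\hess_f$, so the bridge says that for generic $L$ the map $\bullet L^{d-2}\colon A_1\to A_{d-1}$ is an isomorphism iff $\hess_f\ne 0$. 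For (3), $d=2k$ forces $d-2k=0$, so $M_k$ is the matrix of the identity of $A_k$; by Theorem~\ref{thm:generalization} this matrix is $\Hess^{(k^*,k)}(f)(L^{\perp})$, which in dual bases is the Gram matrix of the perfect Gorenstein pairing and hence (a nonzero multiple of) the identity, so $\det\ne 0$ and the rank identity forces $\hess_f^k\ne 0$.

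For (4), recall from the Remark that for a standard graded Artinian Gorenstein algebra the SLP is equivalent to the SLP in the narrow sense, i.e.\ to the existence of a single $L\in A_1$ for which $\bullet L^{d-2k}\colon A_k\to A_{d-k}$ is an isomorphism for all $k\le d/2$ simultaneously. For each fixed $k$ the bridge shows that $U_k=\{L\in A_1 : \det M_k\ne 0\}$ is Zariski open, and nonempty precisely when $\hess_f^k\ne 0$. If $A$ has the SLP, some $L$ lies in every $U_k$, forcing each $U_k\ne\emptyset$ and hence $\hess_f^k\ne 0$ for all $k\le d/2$. Conversely, if every $\hess_f^k\ne 0$ then each $U_k$ is a nonempty Zariski-open subset of $A_1$; since there are only finitely many integers $k\le d/2$, the intersection $\bigcap_k U_k$ is again nonempty and Zariski open, and any $L$ in it is a Lefschetz element witnessing the SLP.

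The main obstacle is conceptual rather than computational, and it lives entirely in the first paragraph: one must recognize that Theorem~\ref{thm:generalization} outputs the \emph{dual} mixed Hessian $\Hess^{((d-k)^*,k)}$, not $\Hess_f^k$ itself, and that the rank identity is exactly the tool converting the non-vanishing of this dual determinant into the condition $\hess_f^k\ne 0$. Once this translation is secured, Parts (1)--(3) reduce to identifying $1\times 1$ or identity matrices, and the only remaining subtlety is the finite intersection of Zariski-open conditions in Part (4), which is what allows a single linear form $L$ to serve as a Lefschetz element for all $k$ at once.
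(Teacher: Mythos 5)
Your proof is correct and follows exactly the route the paper intends: the corollary is stated as an immediate consequence of Theorem~\ref{thm:generalization} (it is the main result of \cite{MW} as generalized in \cite{GZ2}), and your ``bridge'' via the rank identity $\rk \Hess^{((d-k)^*,k)} = \rk \Hess^{(k,k)}$, together with the Zariski-openness argument for a common Lefschetz element in part (4), is precisely the standard derivation the authors leave implicit.
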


Using Theorem \ref{thm:GN} and Theorem \ref{thm:generalization} and we get the following. 
\begin{cor}\label{cor:lowdeg}
 All standard graded Artinian algebras $A$ of $\codim A \leq 4$ and of socle degree $=3,4$ have the SLP. 
\end{cor}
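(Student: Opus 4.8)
The plan is to reduce the statement to the Hessian criterion of Corollary \ref{cor1}(4) and then dispatch each of the finitely many required nonvanishing conditions using either Gordan--Noether (Theorem \ref{thm:GN}) or the automatic nonvanishing of the middle Hessian from Corollary \ref{cor1}(3). First I would invoke Macaulay--Matlis duality to write $A=A(f)$ for a form $f\in R=\C[x_0,\ldots,x_n]$ whose degree $d=\deg f$ equals the socle degree of $A$ and whose number of variables satisfies $n+1=\codim A\le 4$, so that $n\le 3$. Since $A$ is standard graded with $\Ann(f)_1=0$, the hypersurface $X=V(f)\subset\P^n$ is not a cone; this is exactly the hypothesis needed to apply Gordan--Noether. (The reduction to $A(f)$ is legitimate precisely because $A$ is Gorenstein, which is also what makes the Hessian criterion available through Theorem \ref{thm:generalization}.)

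Next I would apply Corollary \ref{cor1}(4): for $A=A(f)$ the SLP holds if and only if $\hess_f^k\neq 0$ for every $0\le k\le d/2$. It therefore suffices to check this short list for $d=3$ and $d=4$. The index $k=0$ is free: by Corollary \ref{cor1}(1) the map $\bullet L^d:A_0\to A_d$ is an isomorphism for generic $L$, so $\hess_f^0\neq 0$.

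The index $k=1$ is where the codimension restriction enters. Here $\hess_f^1=\hess_f$ is the ordinary Hessian, and since $n\le 3$ while $X$ is not a cone, Theorem \ref{thm:GN} forces $\hess_f\neq 0$: were $\hess_f=0$, then $X$ would be a cone, contradicting $\Ann(f)_1=0$. This already settles socle degree $d=3$ completely, because $d/2=3/2$ leaves only the indices $k=0,1$. For $d=4$ the sole remaining index is $k=2=d/2$; since $d=2k$ is even, Corollary \ref{cor1}(3) gives $\hess_f^2\neq 0$ automatically. Hence in both cases every $\hess_f^k$ with $0\le k\le d/2$ is nonzero, and $A$ has the SLP.

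I expect no serious obstacle here: the entire content is verifying that the list $\{\hess_f^k:0\le k\le d/2\}$ is exhausted by the three mechanisms above. The one point requiring care is the bookkeeping that, precisely because $d\in\{3,4\}$ and $\codim A\le 4$, the only genuinely nontrivial indices reduce to $k=1$ (controlled by Gordan--Noether, whose validity is exactly the range $n\le 3$) and, when $d$ is even, the top index $k=d/2$ (controlled by the generic nonvanishing of the middle Hessian). Any larger socle degree or codimension would produce intermediate indices $k$ with $2\le k<d/2$ that neither Theorem \ref{thm:GN} nor Corollary \ref{cor1}(3) governs, which is why the conclusion is confined to this low range.
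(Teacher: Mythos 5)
Your argument is correct and is essentially the paper's own proof, which simply cites Theorem \ref{thm:GN} and Theorem \ref{thm:generalization}: you have just made explicit the bookkeeping that for $d\in\{3,4\}$ the only Hessians to check are $\hess_f^0$, $\hess_f^1$ (handled by Gordan--Noether in $n\le 3$ variables since $V(f)$ is not a cone) and, for $d=4$, the middle Hessian $\hess_f^2$ (nonzero by Corollary \ref{cor1}(3)). Your tacit reading of the statement as concerning \emph{Gorenstein} algebras is the intended one, since the reduction to $A(f)$ and the Hessian criterion both require it.
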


\begin{cor}\cite{Go} For each pair $(n,d)\not\in\{(3,3), (3,4)\}$ with  $N \geq 3$ and with $d \geq 3$, there exist standard graded 
Artinian Gorenstein algebras $A = \displaystyle \oplus_{i=0}^d A_i$ of codimension $\dim A_1=n+1 \geq 4$ and socle degree $d$ that do not satisfy the Strong 
Lefschetz Property. Furthermore, for each $L \in A_1$ we can choose arbitrarily the level $k$ where the map 
$$\bullet L^{d-2k} : A_k \to A_{d-k}$$ 
is not an isomorphism.
\end{cor}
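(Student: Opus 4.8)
The plan is to convert the statement into a vanishing statement for a single higher Hessian, and then to exhibit such forms for all admissible parameters. By Theorem~\ref{thm:generalization}, for $L\in A_1$ with $L^{\perp}=(a_0,\dots,a_n)$ the matrix of $\bullet L^{d-2k}\colon A_k\to A_{d-k}$ equals $(d-2k)!\,\Hess^{((d-k)^{*},k)}(f)(L^{\perp})$. By the rank identity recorded after the dual mixed Hessian (applied with the substitution $k\mapsto d-k$, $l\mapsto k$) one has $\rk\Hess^{((d-k)^{*},k)}(f)=\rk\Hess_f^{(k,k)}=\rk\Hess_f^{k}$, so the matrices $\Hess^{((d-k)^{*},k)}(f)$ and $\Hess_f^{k}$ have the same generic rank. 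Hence $\bullet L^{d-2k}$ is non-invertible for every $L\in A_1$ precisely when this common rank fails to be maximal, i.e.\ when $\hess_f^{k}\equiv 0$; and by Corollary~\ref{cor1}(4) this is a failure of the SLP located at level $k$. It therefore suffices, for each admissible $(n,d)$ and each target level $1\le k<d/2$, to produce a form $f$ with $\dim A(f)_1=n+1$, socle degree $d$, and $\hess_f^{k}\equiv 0$.

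First I would build base examples of Perazzo type. Splitting the variables as $\mathbf u=(u_1,\dots,u_a)$ and $\mathbf v=(v_1,\dots,v_b)$, set
\[
 f=\sum_{|\gamma|=k} u^{\gamma}\,b_{\gamma}(\mathbf v),\qquad \deg b_{\gamma}=d-k .
\]
Since $\partial_{\mathbf u}^{\delta}f=\delta!\,b_{\delta}$ for $|\delta|=k$, the order-$k$ polars of $f$ in the $\mathbf u$-directions are exactly the $b_{\gamma}$. When the number $\binom{a-1+k}{k}$ of degree-$k$ monomials in $\mathbf u$ exceeds the number $b$ of $\mathbf v$-variables, the forms $b_{\gamma}\in\C[\mathbf v]_{d-k}$ are algebraically dependent, and a Gordan--Noether type argument (the order-$k$ analogue of Proposition~\ref{prop:GNcriteria}(ii)) forces $\hess_f^{k}\equiv 0$. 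Choosing the partition minimally fixes the smallest attainable codimension: for $k=1$ one needs $a>b$ with $b\ge 2$, giving codimension $5$ (matching Theorem~\ref{thm:GN2}, while Theorem~\ref{thm:GN} forbids anything smaller); for $k\ge 2$ one may take $a=b=2$, where there are $k+1\ge 3>2$ monomials, giving codimension $4$. A suitable choice of the $b_{\gamma}$ keeps $\Ann(f)_1=0$, so $\dim A_1$ equals the number of variables.

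To reach an arbitrary codimension $n+1$ at a fixed pair $(d,k)$ I would climb by connected sums with Fermat powers: replace $f$ by $f+z^{d}$ in a fresh variable $z$, which raises the codimension by one and preserves the socle degree $d$. In the connected sum every $L\in A_1$ is $L_f+c\,\partial_z$, and since products of positive-degree elements from different summands vanish below the socle, $\bullet L^{d-2k}$ is block diagonal, namely $(\bullet L_f^{d-2k})\oplus(\bullet(c\partial_z)^{d-2k})$ on $A_k(f)\oplus A_k(z^d)$. As $\hess_f^{k}\equiv 0$ makes the first block singular for every $L_f$, the whole map is singular for every $L$, so the level-$k$ failure, for all $L$, is preserved. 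Iterating from the minimal base example produces the required algebra in codimension $n+1$ whenever a base exists, that is for all $n\ge 3$, $d\ge 3$ except when level $k=1$ is forced in codimension $4$. In codimension $4$ the failing level must satisfy $k\ge 2$ by Theorem~\ref{thm:GN}, which together with $k<d/2$ requires $d\ge 5$; the sole exceptions are therefore $n=3$ with $d\le 4$, i.e.\ the pairs $(3,3)$ and $(3,4)$, which indeed carry the SLP by Corollary~\ref{cor:lowdeg}.

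The hard part will be the second paragraph: showing that the \emph{full} $k$-th Hessian determinant $\hess_f^{k}$, computed over an entire basis of $A_k$ rather than only the $\mathbf u$-monomials, vanishes. This amounts to proving that algebraic dependence of the $b_{\gamma}$ propagates to all order-$k$ polars of $f$ --- the higher-order version of the classical Perazzo/Gordan--Noether mechanism --- and is exactly where a higher Gordan--Noether criterion (in the spirit of the later Theorem~\ref{thm:polarrankhess}) is needed. By contrast, the rank reduction of the first paragraph and the block-diagonal bookkeeping of the third are purely formal.
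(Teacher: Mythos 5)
The paper offers no proof of this corollary --- it is quoted verbatim from \cite{Go} --- so the only question is whether your argument stands on its own. Your first paragraph (reducing ``$\bullet L^{d-2k}$ fails for every $L$'' to $\hess^k_f\equiv 0$ via Theorem \ref{thm:generalization} and the rank identity for dual mixed Hessians), your third paragraph (padding by $+z^d$ in a fresh variable, which is indeed block-diagonal on $\bullet L^{d-2k}$ and raises the codimension by one), and your identification of the exceptional pairs $(3,3),(3,4)$ via Theorem \ref{thm:GN} are all correct. The gap is exactly where you flag it, but it is not merely ``the hard part'': the mechanism you propose is false for $k\ge 2$. Algebraic dependence of the pure $\mathbf u$-polars $b_\gamma$ does not force $\hess^k_f\equiv 0$. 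Concretely, for $(n,d,k)=(3,5,2)$ and $a=b=2$ take
$$f=u_1^2v_1^3+u_1u_2\,v_1^2v_2+u_2^2v_2^3,$$
which has your proposed shape with $\binom{a-1+k}{k}=3>2=b$, so $b_{20},b_{11},b_{02}$ are three cubics in two variables, automatically algebraically dependent. One checks $\Ann(f)_1=0$ and $\Ann(f)_2=0$, so $A_2$ is bigraded with pieces $A_{(2,0)},A_{(1,1)},A_{(0,2)}$ of dimensions $3,4,3$; since an entry of $\Hess^2_f$ indexed by $(p,q)\times(p',q')$ vanishes when $p+p'>2$ or $q+q'>3$, the determinant factors as $\pm(\det C)^2\det D$, where $C=[X_{\mathbf u}^{\gamma}X_{\mathbf v}^{\delta}(f)]$ is $3\times 3$ and $D$ is the $4\times4$ block on $A_{(1,1)}$. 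A direct computation gives $\det C$ a nonzero multiple of $v_1^2v_2$ and $\det D$ a nonzero multiple of $v_1^3(36v_2-v_1)$, hence $\hess^2_f\neq 0$ and this $f$ does \emph{not} fail the SLP at level $2$.

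The conceptual reason is one the paper itself emphasizes: for $k\ge2$ the Gordan--Noether equivalence splits into two inequivalent conditions. Degeneracy of $\varphi^k$ concerns \emph{all} $k$-th polars (adding the mixed polars can only increase $\dim Z_k$, so dependence of the sub-family $\{b_\gamma\}$ proves nothing), it implies $\hess^k_f=0$ only under the injectivity hypothesis of Corollary \ref{cor:polar1}, and the converse fails (the example $f=xu^3+yu^2v+zuv^2+v^4$ has $\hess^2_f=0$ with $\varphi^2$ non-degenerate). So neither implication you need is available. The actual construction in \cite{Go} avoids polar maps entirely: one takes bihomogeneous forms of bidegree $(1,d-1)$, $f=\sum_i u_ig_i(\mathbf v)$, for which $A_k=A_{(1,k-1)}\oplus A_{(0,k)}$ and the block $A_{(1,k-1)}\times A_{(1,k-1)}$ of $\Hess^k_f$ is identically zero because the $u$-degree of $f$ is $1$; choosing the $g_i$ so that $\dim A_{(1,k-1)}>\dim A_{(0,k)}$ makes this zero block larger than half the matrix, killing the determinant. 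It is that bigraded dimension count, not algebraic dependence of a family of polars, that you would need to set up for each admissible $(n,d,k)$; your codimension-raising and level-locating arguments can then be kept as they are.
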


\begin{rmk}\rm For algebras of codimension $2$, SLP hold in general. Therefore, for Gorenstein algebras, it means that the higher Hessians are not zero. 
This result is a first step in order to generalize Theorem \ref{thm:GN}.
 The issue is that in codimension $3$ the problem is open, that is, we do not know if there is an AG algebra failing SLP.  
 A generalizaion of Theorem \ref{thm:GN2} can be found in \cite{Go}. In this work we give a generalization of Proposition \ref{prop:GNcriteria}, see Theorem \ref{thm:polarrankhess}.
\end{rmk}

\subsection{Jacobian ideals and Milnor algebras}

Let $R=\C[x_0,\ldots,x_n]$ be the polynomial ring in $n+1$ variables with complex coefficients, endowed with the usual grading.\\ Let $f\in R_d$ be a homogeneous polynomial of degree $d$ such that the hypersurface $X=V(f)\subset \p^n$ is reduced. Let $J(f)$ be the Jacobian ideal of $f$, generated by the partial derivatives $f_{x_i}$, of $f$ with respect to $x_i$ for $i=0,\ldots,n$. If $X$ is smooth, then the ideal $J(f)$ is generated by a regular sequence, and $M(f)=R/J(f)$ is a Gorenstein Artinian algebra. Moreover we have $$\dim_{\C} M(f)<+\infty\Leftrightarrow V(f) \mbox{ is a smooth, }$$ 
and the corresponding Hilbert function is given by
\begin{equation} \label{eq0}
H(M(f);t)=\left( \frac{1-t^{d-1}}{1-t}\right)^{n+1}.
\end{equation}
In particular, the socle degree of $M(f) $ is $(d-2)(n+1)$.\\

Assume now that $X\subset \p^n$ is singular, but reduced. In this case the Jacobian algebra is not of finite length, in particular it is not Artinian. It contains information on the structure of the singularities and on the global geometry of $X$.The following results can be found in \cite{IG}.

\begin{prop} \label{propIG1}
 Let $V:f = 0$ be a hypersurface in $\p^n$ of degree $d>2$, such that its singular locus $V_s$ has dimension at most $n-3$. Then $M(f)$ has the WLP in degree $d-2$.
\end{prop}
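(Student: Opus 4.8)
The plan is to establish the Weak Lefschetz property in the single degree $d-2$ directly, by analysing the multiplication map $\bullet L\colon M(f)_{d-2}\to M(f)_{d-1}$ for a general linear form $L\in R_1$. First I would record the two relevant graded pieces. Since $J(f)$ is generated in degree $d-1$, one has $J(f)_{d-2}=0$, so $M(f)_{d-2}=R_{d-2}$; and $J(f)_{d-1}=\langle f_{x_0},\dots,f_{x_n}\rangle$ has dimension $n+1$ because $V(f)$ is not a cone (Proposition \ref{prop:GNcriteria}(i)), so $\dim_\C M(f)_{d-1}=\binom{n+d-1}{n}-(n+1)$. A short computation via Pascal's identity gives $\dim_\C M(f)_{d-1}-\dim_\C M(f)_{d-2}=\binom{n+d-2}{n-1}-(n+1)\ge 0$ for $d>2$ and $n\ge 3$, so ``maximal rank'' in this degree means injectivity. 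Hence the statement reduces to: for general $L$, the map $\bullet L\colon R_{d-2}\to M(f)_{d-1}$ is injective.

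Next I would identify the kernel geometrically. An element $g\in R_{d-2}$ lies in the kernel exactly when $Lg\in J(f)_{d-1}$, that is $Lg=\sum_i c_i f_{x_i}=:D_c f$ for constants $c=(c_0,\dots,c_n)$; thus $\ker(\bullet L)\ne 0$ if and only if the hyperplane $H=V(L)$ divides some nonzero first polar $D_c f$. Restricting to $H$, this is equivalent to the restricted partials $f_{x_0}|_H,\dots,f_{x_n}|_H$ being linearly dependent in the coordinate ring of $H$, i.e. to the restriction $\varphi_X|_H$ of the polar map having its image contained in a hyperplane of the target. So it suffices to prove that for a general hyperplane $H$ the restricted polar map $\varphi_X|_H$ is nondegenerate.

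This nondegeneracy is the crux, and it is where the hypothesis $\dim V_s\le n-3$ enters: the base locus of the polar linear system is the singular scheme $V_s$, which now has codimension at least $3$, so in particular the polar system has no fixed divisorial component. I would encode the obstruction by the incidence variety $\Sigma=\{([L],[v])\in(\P^n)^*\times\P(\langle f_{x_i}\rangle)\colon L\mid v\}$ and aim to show that its first projection to $(\P^n)^*$ is not dominant, which is precisely the assertion that a general $L$ is injective on $M(f)_{d-2}$. Since a fixed nonzero form has only finitely many linear factors, the fibers of the second projection $\Sigma\to\P(\langle f_{x_i}\rangle)$ are finite, whence $\dim\Sigma\le n$; the point is to show the projection drops dimension, i.e. that the general polar $D_c f$ has no linear factor. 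The codimension hypothesis is what excludes a moving hyperplane component of $D_c f$: a factorization $D_c f=Lg$ forces the residual $V(g)$ to absorb every component of $V_s$ not met transversally by the general $H$, and a dimension count against $\codim V_s\ge 3$ rules this out for general members.

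I expect the main obstacle to be exactly this last point, and the cleanest rigorous route is probably to replace the geometric count by induction on $n$ through general hyperplane sections. For general $H$ the restriction $\bar f=f|_H$ defines a reduced hypersurface in $\P^{n-1}$ whose singular locus $V_s\cap H$ has dimension at most $n-4=(n-1)-3$, so the inductive hypothesis applies to $\bar f$, and one transfers the conclusion back to $f$ along the exact sequence relating $M(f)$, $M(f)(-1)\xrightarrow{\bullet L}M(f)$ and $M(f)/LM(f)$. Equivalently, one shows that the order zero local cohomology $H^0_{\mathfrak m}(M(f))$ (the saturation defect of $J(f)$) vanishes in degree $d-2$, since $\ker(\bullet L)_{d-2}$ for general $L$ is controlled by this finite length module. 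The delicate part of the induction is that $M(f)/LM(f)$ is not quite the Milnor algebra of $\bar f$ but a quotient of it by the extra restricted partial $(f_{x_n})|_H$; managing the resulting graded dimensions, together with the base case of small $n$ where $V_s$ is finite, is where the real work lies.
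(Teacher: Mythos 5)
The paper does not prove this statement; it quotes it from \cite{IG}, so I can only judge your argument on its own terms. Your reduction is the correct one: since $J(f)_{d-2}=0$, maximal rank in degree $d-2$ means injectivity of $\bullet L\colon R_{d-2}\to M(f)_{d-1}$, and this fails exactly when $L$ divides some nonzero first polar $D_cf=\sum_i c_if_{x_i}$; the bound $\dim\Sigma\le n$ for the incidence variety is also right. But the crucial step --- showing that the general polar has no linear factor, which is the \emph{only} place the hypothesis $\dim V_s\le n-3$ can enter --- is not proved, and the mechanism you propose for it does not work. Saying that ``$V(g)$ must absorb every component of $V_s$ not met transversally by $H$'' leads nowhere: $V_s$ has dimension $\le n-3$ and sits comfortably inside $V(L)\cup V(g)$, so mere containment produces no dimension count against $\codim V_s\ge 3$. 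The actual mechanism is Bertini applied to the polar linear system $\{V(D_cf)\}_c$, whose base locus is set-theoretically $V_s$ (by Euler's formula): in characteristic zero the general member is smooth away from the base locus, so $\dim \Sing V(D_cf)\le \dim V_s\le n-3$ for general $c$, whereas a factorization $D_cf=Lg$ with $\deg g=d-2\ge 1$ forces $V(D_cf)$ to be singular along $V(L)\cap V(g)$, which is nonempty of dimension $\ge n-2$. This contradiction shows the general polar is irreducible, hence the image of $\Sigma$ in the space of polars is proper, $\dim\Sigma\le n-1$, and the projection to $(\p^n)^*$ is not dominant. Without this (or an equivalent) input your argument never actually uses the codimension hypothesis.

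Your two fallback routes do not repair the gap. The induction on hyperplane sections does not close: what you need for $f$ in degree $d-2$ is the linear independence of all $n+1$ restrictions $f_{x_i}|_H$ on $H$, while the inductive hypothesis for $\bar f=f|_H$ in degree $d-2$ concerns the $n$ partials of $\bar f$ restricted to a further hyperplane section of $H$ --- a different assertion, and the extra ``normal'' partial $f_{x_n}|_H$ is precisely the term the induction cannot see. The local cohomology route rests on a false identification: for general $L$ one only gets $\ker(\bullet L)=(0:_{N}L)$ with $N=H^0_{\mathfrak m}(M(f))$, which is in general much smaller than $N$, and $N_{d-2}$ certainly need not vanish --- for $V(f)$ smooth, $M(f)$ is Artinian, so $H^0_{\mathfrak m}(M(f))=M(f)$ and its degree $d-2$ piece is all of $R_{d-2}$. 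A minor further point: the hypothesis $\dim V_s\le n-3$ does not exclude cones with zero-dimensional vertex, so you cannot invoke Proposition~\ref{prop:GNcriteria}(i) to conclude $\dim J(f)_{d-1}=n+1$; fortunately the target $M(f)_{d-1}$ only gets larger when the partials are dependent, so injectivity remains the relevant condition and the rest of the setup survives.
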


\begin{prop} \label{propIG2}

Let $V:f = 0$ be a hypersurface in $\p^n$ of degree $d>2$, such that its singular locus $V_s$ has dimension at most $n-3$. Then for every positive integer $k<d-1$ $M(f)$ has the SLP in degree $d-k-1$ at range $k$.
\end{prop}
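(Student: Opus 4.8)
The plan is to deduce the statement from Proposition \ref{propIG1} by factoring the Strong Lefschetz power $\bullet L^k$ through the intermediate graded pieces of $M(f)$, exploiting that $M(f)$ coincides with $R$ in low degrees. First I would record the elementary but decisive observation that the Jacobian ideal $J(f)$ is generated by the partials $f_{x_0},\dots,f_{x_n}$, all of degree $d-1$, so that $J(f)_j=0$ and hence $M(f)_j=R_j$ for every $j\le d-2$. Since $k<d-1$ we have $d-k-1\ge 1$, so the map whose maximal rank we must establish, namely $\bullet L^k:M(f)_{d-k-1}\to M(f)_{d-1}$, has source $M(f)_{d-k-1}=R_{d-k-1}$ and target $M(f)_{d-1}=R_{d-1}/\langle f_{x_0},\dots,f_{x_n}\rangle$. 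Interpreting ``SLP in degree $d-k-1$ at range $k$'' via the definition of the SLP, this is exactly the claim that $\bullet L^k:M(f)_{d-k-1}\to M(f)_{d-1}$ has maximal rank for a suitable $L\in R_1$.

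Next I would factor this map. For a nonzero linear form $L$ the map $\bullet L^k$ is the composition
\[
M(f)_{d-k-1}\xrightarrow{\bullet L}M(f)_{d-k}\xrightarrow{\bullet L}\cdots\xrightarrow{\bullet L}M(f)_{d-2}\xrightarrow{\bullet L}M(f)_{d-1}.
\]
For each intermediate index $j$ with $d-k-1\le j\le d-3$ one has $j+1\le d-2$, so the corresponding factor is just multiplication $\bullet L:R_j\to R_{j+1}$, which is injective because $R$ is an integral domain and $L\neq 0$. The only nontrivial factor is the last one, $\bullet L:M(f)_{d-2}\to M(f)_{d-1}$, which is precisely the Weak Lefschetz map in degree $d-2$; by Proposition \ref{propIG1} it has maximal rank for a suitable (general) Lefschetz element $L\in R_1$.

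Finally I would upgrade ``maximal rank'' to ``injectivity'' for this top factor by a Hilbert-function comparison. Since $\dim_\C R_{d-1}-\dim_\C R_{d-2}=\binom{n+d-2}{n-1}\ge n+1\ge \dim_\C\langle f_{x_0},\dots,f_{x_n}\rangle$ throughout the relevant range $n\ge 3$, $d\ge 3$, one gets $\dim_\C M(f)_{d-2}\le \dim_\C M(f)_{d-1}$, so the maximal rank of $\bullet L:M(f)_{d-2}\to M(f)_{d-1}$ is necessarily injectivity. Fixing such a Lefschetz element $L$ (in particular $L\neq 0$), every factor in the factorization above is injective, whence the composite $\bullet L^k$ is injective. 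The same estimate gives $\dim_\C M(f)_{d-k-1}\le \dim_\C M(f)_{d-1}$, so injectivity of $\bullet L^k$ is exactly its maximal rank, which is the desired SLP statement; for $k=1$ the factorization is empty and the assertion is Proposition \ref{propIG1} itself.

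I expect the only genuine subtlety to be this passage from ``maximal rank'' to ``injectivity'' for the top factor, which rests on the inequality $\dim_\C M(f)_{d-2}\le \dim_\C M(f)_{d-1}$; this is where reducedness and $d>2$ enter, and note that it holds unconditionally since $\dim_\C\langle f_{x_0},\dots,f_{x_n}\rangle\le n+1\le \binom{n+d-2}{n-1}$, independently of whether the partials are linearly independent. One must also check that a single general $L$ serves simultaneously as a Lefschetz element for Proposition \ref{propIG1} and as a nonzero form for the trivial injections, which it does. The low-codimension cases forced to be smooth (where $n\le 2$) are absorbed either by the same dimension count or by the codimension-$\le 2$ SLP recalled earlier.
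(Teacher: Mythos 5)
The paper does not actually prove this proposition: it is recalled verbatim from \cite{IG} with no argument given, so there is no in-paper proof to compare against. Judged on its own terms, your derivation is correct and is essentially the natural reduction of the range-$k$ statement to the range-$1$ statement of Proposition \ref{propIG1}: since $J(f)$ is generated in degree $d-1$, one has $M(f)_j=R_j$ for $j\le d-2$, so all but the last factor of $\bullet L^k\colon M(f)_{d-k-1}\to M(f)_{d-1}$ is an injective map $R_j\to R_{j+1}$, and the last factor is the WLP map in degree $d-2$. Your upgrade of ``maximal rank'' to ``injectivity'' via $\dim M(f)_{d-2}\le \dim M(f)_{d-1}$ is the right move, and the inequality $\binom{n+d-2}{n-1}\ge n+1$ does hold for all $n\ge 2$, $d\ge 3$ (only the degenerate case $n=1$ escapes it, and there the hypothesis forces smoothness and $M(f)$ is a codimension-two complete intersection, which has the SLP as recalled in the paper). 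Two small wording cautions: the phrase ``holds unconditionally'' should be qualified by $n\ge 2$, and you should state explicitly that a general $L$ works simultaneously as the Lefschetz element of Proposition \ref{propIG1} and as a nonzero form (you do note this). With those caveats the argument is complete and almost certainly coincides with the intended proof in \cite{IG}.
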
 

\begin{thm} \label{thmIG1}
 Let $V:f = 0$ be a general hypersurface, then $M(f)$ has the SLP.  
\end{thm}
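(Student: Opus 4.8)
The plan is to show that the forms $f$ whose Milnor algebra $M(f)$ enjoys the SLP contain a nonempty Zariski-open subset of the space of degree $d$ forms in $R$, so that the general $f$ lies in it. Throughout I assume $X=V(f)$ is smooth; this is itself a nonempty open condition, and on this locus $M(f)$ is a graded Artinian complete intersection, hence a standard graded Artinian Gorenstein algebra, with socle degree $e=(d-2)(n+1)$ and Hilbert function given by \eqref{eq0}. The essential point is that this Hilbert function is independent of $f$, so for every smooth $f$ the graded pieces $M(f)_k$ have the same fixed dimensions, and in particular $\dim_\C M(f)_k=\dim_\C M(f)_{e-k}$ by Gorenstein symmetry.

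By the Remark following the definition of the Lefschetz properties, for a standard graded Artinian Gorenstein algebra SLP and SLP in the narrow sense coincide; thus $M(f)$ has the SLP precisely when there exists $L\in M(f)_1$ such that every multiplication map $\bullet L^{e-2k}: M(f)_k\to M(f)_{e-k}$, $k\le e/2$, is an isomorphism. (Equivalently, by Corollary \ref{cor1}(4) applied to a Macaulay dual $g$ of $M(f)$, all higher Hessians $\hess_g^k$ are nonzero; I would phrase the argument via the multiplication maps, but either formulation works.)

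Next I would establish that this locus is Zariski-open. Fixing bases of the graded pieces, the matrix of $\bullet L^{e-2k}$ with $L=a_0x_0+\cdots+a_nx_n$ has entries that are polynomials in the coefficients of $f$ and in $a=(a_0,\ldots,a_n)$; set $P_k(f,a)=\det(\bullet L^{e-2k})$ and expand $P_k=\sum_\beta c_{k,\beta}(f)\,a^\beta$ as a polynomial in $a$. For fixed smooth $f$: if $P_k(f,\cdot)\equiv 0$ then no $L$ makes $\bullet L^{e-2k}$ an isomorphism, whereas if $P_k(f,\cdot)\not\equiv 0$ for every $k\le e/2$ then each bad locus $\{L:P_k(f,L)=0\}$ is a proper closed subset, so a generic $L$ avoids all of them and realizes the SLP for every $k$ simultaneously. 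Hence $M(f)$ has the SLP if and only if $P_k(f,\cdot)\not\equiv 0$ for all $k$, i.e. if and only if for each $k$ some coefficient $c_{k,\beta}(f)$ is nonzero. The SLP locus is therefore $\bigcap_k\bigcup_\beta\{c_{k,\beta}\ne 0\}$, a finite intersection of open sets, hence Zariski-open.

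It remains to exhibit one smooth $f$ with $M(f)$ having the SLP, which is the real content. I would take the Fermat form $f=x_0^d+\cdots+x_n^d$, which is smooth since $f_{x_i}=d\,x_i^{d-1}$ have no common zero in $\P^n$. Then $J(f)=(x_0^{d-1},\ldots,x_n^{d-1})$, so $M(f)=R/(x_0^{d-1},\ldots,x_n^{d-1})$ is exactly the monomial complete intersection of Example \ref{ex:monomial} with all exponents equal to $d-1$, and therefore has the SLP by the Stanley--Watanabe result recalled there (Hard Lefschetz applied to $(\P^{d-2})^{n+1}$). Thus the open SLP locus is nonempty; being a nonempty Zariski-open subset of the irreducible space of degree $d$ forms, it is dense and contains the generic point, so the general $f$ has $M(f)$ with the SLP. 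The main obstacle is precisely this nonemptiness step: the openness argument is formal and only upgrades a single example to the generic case, so the whole proof rests on having the monomial complete intersection witness supplied by Example \ref{ex:monomial}.
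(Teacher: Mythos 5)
Your argument is correct and essentially complete. Note first that the paper itself gives no proof of this statement: Theorem \ref{thmIG1} is simply quoted from \cite{IG}, so there is no internal proof to compare against; your write-up supplies the standard argument, and its two ingredients --- openness of the SLP locus on the irreducible family of smooth degree-$d$ forms (where the Hilbert function of $M(f)$ is constant and symmetric), and the Fermat/monomial complete intersection witness furnished by the Stanley--Watanabe theorem of Example \ref{ex:monomial} --- are exactly the right ones, both already available in the paper's preliminaries. The only point you gloss over is in the openness step: the graded pieces $M(f)_k=R_k/J(f)_k$ vary with $f$, so ``fixing bases'' and speaking of $\det(\bullet L^{e-2k})$ as a polynomial in the coefficients of $f$ is not literally possible over the whole smooth locus. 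One should either choose monomial bases that remain bases on a Zariski-open subset (possible since the Hilbert function is constant there), or replace $\bullet L^{e-2k}\colon M(f)_k\to M(f)_{e-k}$ by the lifted map $R_k\oplus (R_{e-k-d+1})^{n+1}\to R_{e-k}$, $(h,(g_i))\mapsto L^{e-2k}h+\sum_i g_i f_{x_i}$, whose surjectivity is equivalent to that of $\bullet L^{e-2k}$ (hence, by the equality of dimensions coming from Gorenstein symmetry, to its being an isomorphism) and is visibly an open condition on $(f,L)$ via maximal minors; your $c_{k,\beta}$ are then the coefficients of these minors as polynomials in $L$, and the rest of your argument goes through verbatim. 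This is a routine repair and does not affect the validity of the proof.
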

In view of the above result, it is natural to ask the following.

\begin{question}\label{q1}
\rm
Is it true for any homogeneous polynomial $f$ with $V(f)$ smooth?  
\end{question}

We have the following results in relation with this question.

\begin{prop} \label{propSLP1}
 Let $V:f = 0$ be any smooth surface in $\p^3$ of degree $d=3$. Then $M(f)$ has the SLP.
\end{prop}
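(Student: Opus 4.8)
The plan is to recognize that $M(f)$ is intrinsically one of the algebras covered by Corollary \ref{cor:lowdeg}, and then to trace the SLP back to the nonvanishing of a single Hessian. First I would record the numerical invariants of $M(f)$. Since $V(f)$ is a smooth cubic surface we have $n=3$ and $d=3$, so the Jacobian ideal $J(f)$ is generated by the four quadrics $f_{x_0},\dots,f_{x_3}$, which form a regular sequence; hence $M(f)=R/J(f)$ is a standard graded Artinian Gorenstein algebra with $J(f)_1=0$. Substituting $d=3$ and $n=3$ into \eqref{eq0} gives the Hilbert function $(1-t^2)^4/(1-t)^4=(1+t)^4$, and therefore the Hilbert vector $(1,4,6,4,1)$. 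In particular $M(f)$ has codimension $n+1=4$ and socle degree $(d-2)(n+1)=4$.

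With these invariants in hand, the quickest route is to invoke Corollary \ref{cor:lowdeg} directly: $M(f)$ is a standard graded Artinian Gorenstein algebra of codimension $4$ and socle degree $4$, so it has the SLP, and we are done. To make the argument transparent, however, I would instead unpack it through Corollary \ref{cor1}(4), which for a Gorenstein algebra of socle degree $d=4$ says that the SLP is equivalent to $\hess^k_f\neq 0$ for $k=0,1,2$. The cases $k=0$ and $k=2$ are automatic: the former by Corollary \ref{cor1}(1), the latter by Corollary \ref{cor1}(3) since $d=2k$ with $k=2$. Everything therefore reduces to the single case $k=1$, i.e. to showing that the middle multiplication map $\bullet L^{2}\colon M(f)_1\to M(f)_3$ is an isomorphism for generic $L$, equivalently that $\hess_f\neq 0$ in the sense of Definition \ref{defMH}.

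This last nonvanishing is the heart of the matter, and I expect it to be the only real obstacle. I would handle it by passing to the Macaulay--Matlis dual: since $M(f)$ is standard graded Artinian Gorenstein of codimension $4$ and socle degree $4$, there is a quartic form $F$ in four dual variables with $M(f)\cong A(F)$, and $F$ is not a cone because $\dim_\C M(f)_1=4$ forces $\Ann(F)_1=0$. Now Theorem \ref{thm:GN}, applied in the range $n=3\le 3$ where the Gordan--Noether theory admits no counterexamples, yields $\hess_F\neq 0$ from the fact that $V(F)\subset\P^3$ is not a cone. By Corollary \ref{cor1}(2) this is precisely the statement that $\bullet L^{2}\colon A(F)_1\to A(F)_3$ is an isomorphism, which closes the $k=1$ case and hence the proof.

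The subtlety to keep in mind throughout is that the Lefschetz and Hessian machinery of Corollary \ref{cor1} and the Gordan--Noether criterion are formulated for the Macaulay dual algebra $A(\cdot)$, whereas $M(f)$ is defined as the quotient $R/J(f)$. The one conceptual step is thus to observe that, for $V(f)$ smooth, $M(f)$ is itself such an algebra, so that the smallness of its codimension and socle degree place it squarely within the scope of Theorem \ref{thm:GN}; once this identification is made, the SLP follows formally.
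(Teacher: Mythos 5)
Your proposal is correct and follows essentially the same route as the paper: identify $M(f)$ via Macaulay--Matlis duality with $Q/\Ann(g)$ for a quartic $g$ in four variables, note that $g$ is not a cone since $\dim_{\C} M(f)_1=4$, invoke Theorem \ref{thm:GN} to get $\hess_g\neq 0$, and conclude by Corollaries \ref{cor1} and \ref{cor:lowdeg}. Your version merely spells out the reduction to the single middle Hessian $k=1$ more explicitly (with the harmless notational slip of writing $\hess_f$ where the Hessian of the dual generator is meant, which you correct in the following paragraph).
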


\begin{proof}
Since $M(f)$ is Artinian Gorenstein, by \cite[Theorem 2.1]{MW}  we have
 $$M(f) \cong Q/\Ann(g),$$
for some homogeneous polynomial $g$,
where  $$\deg(g)= \text{ socle degree of } M(f)= (n+1)(d-2)=4$$ and  $\hess_g\neq 0,$
by Theorem \ref{thm:GN}.  Indeed, otherwise $V(g)$ would be a cone, in contradiction with $\dim M(f)_1=4$. By Corollaries \ref{cor1} and \ref{cor:lowdeg}, $M(f)$ has the SLP.
\end{proof}

\begin{prop} \label{propSLP2}
 Let $V:f = 0$ be a smooth curve in $\p^2$ of even degree $d=2d'$. Then the multiplication by the square of a generic linear form $\ell \in R_1$ induces an isomorphism
 $$\ell^2: M(f)_{3d'-4} \to M(f)_{3d'-2}.$$
In particular, when $d=4$, the Milnor algebra $M(f)$ has the SLP.
\end{prop}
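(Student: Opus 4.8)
The plan is to exploit that for a smooth $X=V(f)\subset\P^2$ the Milnor algebra $A:=M(f)=R/J(f)$ is a standard graded Artinian Gorenstein algebra --- indeed a codimension $3$ complete intersection generated by the regular sequence $f_{x_0},f_{x_1},f_{x_2}$ --- whose socle degree, by \eqref{eq0}, is $D=(d-2)(n+1)=3(d-2)=6d'-6$, an \emph{even} number, and whose Hilbert function is symmetric and unimodal with peak at $D/2$. The two degrees in the statement are $3d'-4=\tfrac{D}{2}-1$ and $3d'-2=\tfrac{D}{2}+1$; they are complementary, since $(\tfrac{D}{2}-1)+(\tfrac{D}{2}+1)=D$, so $\dim_{\C}A_{D/2-1}=\dim_{\C}A_{D/2+1}$ and it suffices to prove that $\ell^2$ is injective.

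First I would factor the map through the middle degree,
$$A_{D/2-1}\xrightarrow{\ \ell\ }A_{D/2}\xrightarrow{\ \ell\ }A_{D/2+1}.$$
Writing $P\colon A_{D/2}\times A_{D/2}\to A_D\cong\C$ for the Gorenstein pairing in the middle degree, which is nondegenerate by Corollary \ref{cor1}(3), the two factors are mutually adjoint with respect to the socle pairings; hence, under the identification $A_{D/2+1}\cong A_{D/2-1}^{*}$, the composite $\ell^2$ is encoded by the symmetric bilinear form $B_\ell(u,v)=P(\ell u,\ell v)$ on $A_{D/2-1}$, i.e. the pullback of $P$ along $m_\ell\colon A_{D/2-1}\to A_{D/2}$. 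By Theorem \ref{thm:generalization} together with the identity $\rk\Hess^{(k^*,l)}=\rk\Hess^{(d-k,l)}$, the map $\ell^2$ has the same rank as the square higher Hessian $\Hess^{D/2-1}_g$ evaluated at $\ell^{\perp}$, where $A\cong Q/\Ann(g)$ is the Macaulay--Matlis dual presentation; so the assertion is equivalent to $\hess^{D/2-1}_g\not\equiv 0$, and for generic $\ell$ to the nondegeneracy of $B_\ell$. This nondegeneracy splits into (i) $m_\ell$ injective on $A_{D/2-1}$ for generic $\ell$, and (ii) $\ell\,A_{D/2-1}$ a nondegenerate subspace of $(A_{D/2},P)$, which (since $P$ is nondegenerate) is the condition $\ell\,A_{D/2-1}\cap\ker(m_\ell\colon A_{D/2}\to A_{D/2+1})=0$.

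For the distinguished case $d=4$ one has $\tfrac{D}{2}-1=2=d-2$, so (i) is precisely the WLP in degree $d-2$ supplied by Proposition \ref{propIG1}. Granting the central isomorphism, the remaining complementary maps needed for the Strong Lefschetz Property (equivalently SLP in the narrow sense, by the Remark following the definition) are $\ell^{6}\colon A_0\to A_6$, an isomorphism by Corollary \ref{cor1}(1), and $\ell^{4}\colon A_1\to A_5$: here $\dim A_1=3$ forces $V(g)$ not to be a cone, so $\hess_g\neq 0$ by the Gordan--Noether Theorem \ref{thm:GN} (as $n=2\le 3$) and hence this map is an isomorphism by Corollary \ref{cor1}(2). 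Assembling these with $\ell^{0}\colon A_3\to A_3$ gives the SLP for smooth quartics.

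The main obstacle is point (ii), equivalently the injectivity of $\ell^2$ itself: this is a genuinely Strong-Lefschetz statement and is not reducible to the weak property. I would first settle it generically by specialization to the Fermat curve $f_0=x_0^d+x_1^d+x_2^d$, whose Milnor algebra is the monomial complete intersection $\C[x_0,x_1,x_2]/(x_0^{d-1},x_1^{d-1},x_2^{d-1})$ and therefore has the SLP by Example \ref{ex:monomial}; since $\hess^{D/2-1}_g\not\equiv 0$ is an open condition and the smooth curves of degree $d$ form an irreducible family on which the Hilbert function of $M(f)$ is constant, the property propagates to a dense open set. Promoting this from the generic curve to \emph{every} smooth $f$ is the delicate step, and here I would describe $P$ and its $m_\ell$-pullback explicitly through the Grothendieck residue attached to the regular sequence $(f_{x_0},f_{x_1},f_{x_2})$ --- whose socle generator is exactly $\hess_f$ --- and verify that $B_\ell$ is a nondegenerate Bezoutian-type form for generic $\ell$, uniformly in the moduli of $f$.
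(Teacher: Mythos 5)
Your reductions are sound as far as they go: the symmetry of the Hilbert function of the Gorenstein algebra $M(f)$ correctly reduces the statement to injectivity of $\ell^2$ in the complementary degrees $D/2-1$ and $D/2+1$, the $d=4$ bookkeeping for the remaining SLP maps via Corollary \ref{cor1} and Theorem \ref{thm:GN} is correct, and the specialization to the Fermat curve together with Example \ref{ex:monomial} and semicontinuity does establish the central isomorphism for a \emph{generic} smooth curve of degree $d$. But the proposition asserts the isomorphism for \emph{every} smooth curve, and this is exactly where your argument stops being a proof. Openness of the condition $\hess_g^{D/2-1}\neq 0$ on the irreducible family of smooth curves only yields a dense open subset; your final step --- ``describe $P$ through the Grothendieck residue \dots and verify that $B_\ell$ is a nondegenerate Bezoutian-type form for generic $\ell$, uniformly in the moduli of $f$'' --- is a restatement of the claim to be proved, not an argument. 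The residue description of the socle pairing (with socle generator $\hess_f$) is correct but gives no mechanism forcing the pullback form $B_\ell$ to be nondegenerate for some $\ell$; indeed, for Artinian Gorenstein algebras in general such ``middle'' Lefschetz maps can fail, so something specific to Jacobian rings of smooth plane curves must be invoked.

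The paper closes this gap by a genuinely different, geometric route: by \cite[Remark 3.7]{DStJump}, a linear form $\ell$ for which $\ell^2: M(f)_{3d'-4}\to M(f)_{3d'-2}$ fails to be an isomorphism corresponds precisely to the line $L:\ell=0$ being a jumping line of the second kind for the rank two bundle $T\langle V\rangle$ of logarithmic vector fields along $V$, and Hulek's theorem \cite[Theorem 3.2.2]{KH} guarantees that these jumping lines form a curve in $(\p^2)^{*}$, hence a proper closed subset. This is the uniform-in-moduli input your proposal is missing. If you want to salvage your approach, you would need to supply an equivalent statement; as written, the proposal proves the proposition only for generic $f$, not for all smooth $f$.
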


\begin{proof} Note that the socle degree of $M(f)$ is in this case $T=3(d-2)=6d'-6$.
As explained in \cite[Remark 3.7]{DStJump}, a linear form $\ell$ such that the above map is not an isomorphism corresponds exactly to the fact that the associated line $L: \ell=0$ in $\p^2$ is a jumping line of the second kind for the rank two vector bundle $T\langle V \rangle $ on $\p^2$, where $T\langle V \rangle $ is the sheaf of logarithmic vector fields along $V$ as considered for instance in \cite{AD, DS14,MaVa}. Then a key result \cite[Theorem 3.2.2]{KH} of K. Hulek implies that the set of jumping lines of second kind is a curve in the dual projective plane $(\p^2)^{*}$ of all lines.
When $d=4$, this yields an isomorphism $\ell^2: M(f)_{2} \to M(f)_{4}.$
The other isomorphisms necessary for the SLP follows from Corollary \ref{cor1}. 
\end{proof}

\begin{rmk} \label{RkCurves} \rm 
For any smooth curve $V:f=0$ in $\P^2$, the associated Milnor algebra $M(f)$ has the WLP, as follows from the more general results in \cite{HMNW}. In addition, for a singular, reduced curve $V:f=0$ in $\P^2$, the associated Milnor algebra $M(f)$ is no longer Artinian or Gorenstein, but a partial WLP still holds, see \cite[Corollary 4.4]{DP}.

\end{rmk}\rm

\section{Higher order Jacobian ideals and Milnor algebras}

Let us consider the $k$-th order Jacobian ideal of $f \in R$ to be 
$J^k =J^k(f)= (Q_k \ast  f) = (A_k \ast f$), the ideal generated by the $k$-th order partial derivatives of $f$. Then $J^k$ is a homogeneous ideal and we define $M^k=M^k(f)=R/J^k$ to be the $k$-th order Milnor algebra of $f$. For $k=1$, the ideal $J^1$ is just the usual Jacobian ideal $J(f)$ of $f$
and $M^1$ is the usual Milnor algebra $M(f)$ as defined in the previous section.

\begin{rmk} \label{R2} \rm 
For $k=2$, the ideal $J^2(f)$ is the ideal in $R$ spanned by all the second order partial derivatives of $f$. Euler formula implies that $J(f) \subset J^2(f)$, when $d=\deg(f) \geq 2$.
It follows that  $M^2(f)$ coincides with the graded {\it first Hessian algebra} $H_1(f)$ of the polynomial $f$, as defined in \cite{DSt1}.
It follows from \cite[Theorem 1.1]{DSt1}  and \cite[Example 2.7]{DSt1} that, for a hypersurface $V(f)$ having at most isolated singularities, the algebra $M^2(f)=H_1(f)$ is Artinian 
if and only if the multiplicity of the hypersurface $V(f)$ at any singular point is 2.

\end{rmk}\rm 

The above remark can be extended to higher order Milnor algebras. First consider an isolated hypersurface singularity $(V,0):g=0$ at the origin of $\C^n$. Then we define the $k$-th order Tjurina ideal $TI^k(g)$ to by the ideal in the local ring $\OO_n$ generated by all the partial derivatives $\partial^{\alpha}g$, for $0 \leq |\alpha|\leq k$. The $k$-th order Tjurina algebra of the germ $(V,0)$ is by definition the quotient
$$T^k(V,0)=\frac{\OO_n}{TI^k(g)}.$$
It can be shown that this algebra depends only on the isomorphism class of the germ $(V,0)$, and we define the $k$-th Tjurina number of $(V,0)$ to be the integer
$$\tau^k(V,0)=\dim_{\C}T^k(V,0).$$
With this notation, we have the following result.
\begin{thm} \label{T1}
The $k$-th order Milnor algebra $M^k(f)$ of a reduced homogeneous polynomial $f$ is Artinian if and only if the multiplicity of the projective hypersurface $V(f)$ at any point $p \in V(f)$ is at most $k$. Moreover, if the hypersurface $V(f)$ has only isolated singularities, say at the points $p_1,\ldots,p_s$, then for any $k$ and for any large enough $m$ one has
$$\dim_{\C} M^k(f)_m=\sum_{i=1}^s\tau^k(V,p_i).$$
\end{thm}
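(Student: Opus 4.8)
The plan is to prove the theorem in two logically separate parts: first the global equivalence between $M^k(f)$ being Artinian and the multiplicity bound, and then the local formula expressing $\dim_\C M^k(f)_m$ as a sum of higher Tjurina numbers when the singularities are isolated. The guiding principle throughout is that $M^k(f)=R/J^k(f)$ is Artinian precisely when its associated projective scheme is empty, i.e.\ when the homogeneous ideal $J^k(f)$ defines no points in $\P^n$. Since $J^k(f)$ is generated by all order-$k$ partial derivatives $\partial^\alpha f$ with $|\alpha|=k$ (equivalently by $Q_k \ast f$), the key point is that the common projective vanishing locus $V(J^k(f))$ coincides with the set of points of $V(f)$ of multiplicity at least $k+1$. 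This last identification is local and purely algebraic: at a point $p$, after moving $p$ to the origin and dehomogenizing, the order of vanishing of $f$ equals the smallest order of a non-vanishing partial derivative, so $\mathrm{mult}_p(V(f)) \geq k+1$ holds iff all partials of order $k$ vanish at $p$ iff $p \in V(J^k(f))$.

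For the first part I would argue as follows. The algebra $M^k(f)$ is Artinian if and only if $\dim_\C M^k(f) < \infty$, which by standard graded commutative algebra is equivalent to $\mathrm{Proj}(M^k(f)) = V(J^k(f)) = \emptyset$ in $\P^n$. Using the local identification above, $V(J^k(f)) = \emptyset$ means no point of $\P^n$ has all order-$k$ partials of $f$ vanishing, which by the multiplicity characterization means every point of $V(f)$ has multiplicity at most $k$ (points off $V(f)$ trivially have some order-$0$ "partial," namely $f$ itself after including lower orders; but one must be slightly careful here since $J^k(f)$ is generated only by the exact order-$k$ derivatives, and the Euler relations propagate these down, as noted in Remark \ref{R2} for $k=2$). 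The cleanest formulation is that a point $p$ lies in $V(J^k(f))$ exactly when $\mathrm{mult}_p(V(f)) > k$, and such points automatically lie on $V(f)$; hence emptiness of $V(J^k(f))$ is equivalent to the multiplicity bound. This recovers the $k=1$ classical result and the $k=2$ statement of \cite{DSt1}.

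For the second part, assume the singularities are isolated at $p_1,\ldots,p_s$. The strategy is to compare the graded module $M^k(f)$ with the product of local rings at the singular points. For $m \gg 0$, the Hilbert function of $M^k(f)_m$ stabilizes, and I would identify this stable value with the total dimension of the local obstruction. Concretely, one localizes: away from the $p_i$ the sheaf associated to $J^k(f)$ agrees with the full structure sheaf (the multiplicity is at most $k$ there, so not all order-$k$ partials vanish), so all the contribution to $\dim_\C M^k(f)_m$ for large $m$ concentrates at the $p_i$. Passing to the local ring $\OO_n$ at each $p_i$, the homogeneous order-$k$ Jacobian ideal localizes to the ideal generated by all partials $\partial^\alpha g$ with $|\alpha| \leq k$ — the lower-order derivatives reappear through the Euler relations and the fact that the homogeneous top-degree derivatives generate, upon affine localization, all derivatives down to order $k$ — which is exactly the $k$-th order Tjurina ideal $TI^k(g)$. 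Therefore the local contribution at $p_i$ is $\dim_\C \OO_n/TI^k(g) = \tau^k(V,p_i)$, and summing gives the formula.

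The main obstacle I anticipate is the precise bookkeeping in the local-to-global passage, specifically verifying that the localization of the homogeneous ideal $J^k(f)$ (generated only by the exact order-$k$ partials of the homogeneous $f$) matches the affine Tjurina ideal $TI^k(g)$ (generated by all partials of orders $0$ through $k$ of the dehomogenization $g$). One must show that, after dehomogenizing and localizing at $p_i$, the Euler-type relations among homogeneous derivatives genuinely produce all the intermediate- and lower-order derivatives, so that no generators are lost; the containment $J(f)\subset J^2(f)$ recorded in Remark \ref{R2} is the first instance of this phenomenon, and the general case requires iterating the Euler formula. A second, more technical point is justifying that the Hilbert function of $M^k(f)$ is eventually constant and equal to the stated sum — this follows from the fact that the singularities are isolated, so the associated sheaf is supported on a zero-dimensional scheme, but the identification of the stable value with $\sum_i \tau^k(V,p_i)$ must be made rigorous, presumably via a graded version of the theorem on the dimension of sections of a skyscraper sheaf or by a direct comparison of the graded and local Hilbert series.
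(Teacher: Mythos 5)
Your proposal is correct and follows essentially the same route as the paper: reduce Artinian-ness to emptiness of the projective zero set of $J^k(f)$, use the Euler formula to place that zero set on $V(f)$ and to show that the localization of $J^k(f)$ at a point $p$ equals the ideal generated by all $\partial^{\alpha}g$ with $0\leq |\alpha|\leq k$ for the local equation $g$, and then read off the multiplicity condition; the second claim is handled by the same local identification, concentrating the stable Hilbert function at the isolated singular points. The technical point you flag (that the exact order-$k$ homogeneous partials localize to the full order-$\leq k$ Tjurina ideal) is precisely the step the paper delegates to the argument of \cite[Theorem 1.1]{DSt1}.
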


\proof The algebra $M^k(f)$ is Artinian if and only if the zero set $Z(J^k(f))$ of the ideal $J^k(f)$ in $\p^n$ is empty. Note that a point $p \in Z(J^k(f))$ is a point on the hypersurface $V(f)$, by a repeated application of the Euler formula. If we choose the coordinates on $\p^n$ such that $p=(1:0:\ldots:0)$, then the local equation of the hypersurface germ $(V(f),p)$ is
$$g(y_1,\ldots,y_n)=f(1,y_1,\ldots,y_n)=0,$$
exactly as in the proof of \cite[Theorem 1.1]{DSt1}. It follows that the localization of the ideal $J^k(f)$ at the point $p$ coincides with the ideal generated by all the partial derivatives $\partial^{\alpha}g$, for $0 \leq |\alpha|\leq k$. And these derivatives vanish all at $p$ exactly when the multiplicity of $V(f)$ at $p$ is $>k$. This proves the first claim.
The proof of the second claim is completely similar.
\endproof

Note that \cite[Example 2.18]{DSt1} shows that even for a smooth curve $V(f)$, the algebra $M^2(f)=H_1(f)$ is not Gorenstein in general, since its Hilbert function, which depends on the choice of the smooth curve, is not symmetric and the dimension of the socle can be $>1$.
However, there is a Zariski open subset $U_{d,k}$ in $R_d$ such that the Hilbert vector
$\Hilb(M^k(f))$ is constant for $f \in U_{d,k}$. 
\begin{question}\label{q2}
\rm
Determine the value of the vector $\Hilb(M^k(f))$, or equivalently of the Hilbert function $H(M^k(f),t)$ for $f \in U_{d,k}$. 
\end{question}
By semicontinuity, it follows that
$$h_i(M^k(f))= \min \{ \dim(M^k(g)_i) \ : \ g \in R_d\}.$$

Similarly, there is a Zariski open subset $U'_{d}$ in $R_d$ such that the Hilbert vector
$\Hilb(A(f))$ is constant for $f \in U'_{d}$. 
Using recent results by Zhenjian Wang, see \cite[Proposition 1.3]{ZW2}, we have the following.

\begin{prop} \label{propZW}
 For a  polynomial $f \in U'_d$, one has $h_k(A(f))=\dim Q_k={n+k \choose n}$ for
 $k \leq d/2$ and $h_k(A(f))=h_{d-k}(A(f))={n+d-k \choose n}$ for $d/2<k\leq d$.
 In particular, a Fermat type polynomial $f_F=x_0^d+ \ldots +x_n^d$ is not in $U'_d$, for $d \geq 4$.
\end{prop}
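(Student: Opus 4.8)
The plan is to rewrite everything in terms of the catalecticant (differentiation) maps and then reduce the whole statement to a single generic-injectivity fact. For each $k$ the pairing $R_d \times Q_k \to R_{d-k}$ gives a linear map $C_k \colon Q_k \to R_{d-k}$, $\alpha \mapsto \alpha(f)$, whose kernel is precisely $\Ann(f)_k$; hence $h_k(A(f)) = \dim A(f)_k = \dim Q_k - \dim \Ann(f)_k = \rk C_k$. Since $A(f)$ is Artinian Gorenstein of socle degree $d$, its Hilbert vector is symmetric, so $h_k(A(f)) = h_{d-k}(A(f))$. This disposes of the range $d/2 < k \le d$ once the range $k \le d/2$ is settled, for there $h_k = h_{d-k} = \binom{n+d-k}{n}$ because $d-k < d/2$. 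So everything reduces to proving that, for generic $f$ and every $k \le d/2$, the map $C_k$ is injective, i.e. $\Ann(f)_k = 0$, which forces $h_k = \dim Q_k = \binom{n+k}{n}$.

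Next I would collapse the family of conditions ``$\Ann(f)_k = 0$ for all $k \le d/2$'' into the single condition at the middle index $m = \lfloor d/2\rfloor$. Because $\Ann(f)$ is a homogeneous ideal in the domain $Q$, any nonzero $\alpha \in \Ann(f)_k$ yields the nonzero element $X_0^{\,m-k}\alpha \in \Ann(f)_m$; hence $\Ann(f)_m = 0$ already implies $\Ann(f)_k = 0$ for every $k \le m$. Thus it suffices to show that the middle catalecticant $C_m\colon Q_m \to R_{d-m}$ is injective for generic $f$.

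The core of the argument, and the step I expect to be the main obstacle, is producing a single $f$ with $\Ann(f)_m = 0$; generic injectivity then follows because the bad locus $B = \{ f \in R_d : \Ann(f)_m \ne 0\}$ is closed, being cut out by the maximal minors of $C_m$. I would run a dimension count on the incidence variety $I = \{ ([\alpha], f) \in \P(Q_m) \times R_d : \alpha(f) = 0\}$. For a fixed nonzero $\alpha$, the map $D_\alpha \colon R_d \to R_{d-m}$, $f \mapsto \alpha(f)$, is surjective: its transpose, under the perfect apolarity pairings $Q_j \times R_j \to \C$, is multiplication by $\alpha$ on $Q_{d-m}$, which is injective since $Q$ is a domain. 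Hence every fiber of the first projection $I \to \P(Q_m)$ is a linear subspace of $R_d$ of codimension $\dim R_{d-m}$, so $\dim I = (\dim Q_m - 1) + \dim R_d - \dim R_{d-m}$. As $m \le d/2$ gives $\dim Q_m = \binom{n+m}{n} \le \binom{n+d-m}{n} = \dim R_{d-m}$, we obtain $\dim I \le \dim R_d - 1$. Since $B$ is exactly the image of the second projection $I \to R_d$, we get $\dim B \le \dim I < \dim R_d$, so $B$ is a proper closed subset and the generic $f$ avoids it.

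Finally I would assemble the pieces and read off the Fermat statement. Taking $f$ in the nonempty open set $U'_d \cap (R_d \setminus B)$, the constant Hilbert vector on $U'_d$ coincides with the one just computed, giving the claimed formula. For the last assertion, Example \ref{exFer} gives $h_2(A(f_F)) = n+1$, whereas for $d \ge 4$ one has $2 \le d/2$, so the generic value is $h_2 = \binom{n+2}{2} = \tfrac{(n+1)(n+2)}{2} > n+1$ for every $n \ge 1$; therefore $f_F \notin U'_d$.
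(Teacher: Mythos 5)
Your proof is correct, but it takes a genuinely different route at the one step where the paper leans on an outside source. The paper's own proof is very short: after the same two reductions you make (Gorenstein symmetry to restrict to $k\le d/2$, and the reformulation $h_k(A(f))=\dim Q_k \Leftrightarrow \Ann(f)_k=0 \Leftrightarrow \dim J^k(f)_{d-k}=\dim Q_k$), it simply cites Z.~Wang's result \cite[Proposition 1.3]{ZW2} for the generic statement, and then invokes Example \ref{exFer} for the Fermat polynomial exactly as you do. You instead prove the generic vanishing of $\Ann(f)_k$ from scratch: you first collapse all degrees $k\le d/2$ to the middle degree $m=\lfloor d/2\rfloor$ using that $\Ann(f)$ is an ideal in the domain $Q$, and then run an incidence-variety dimension count, with the surjectivity of $D_\alpha\colon R_d\to R_{d-m}$ supplied by the clean duality argument that its transpose is multiplication by $\alpha$ on $Q_{d-m}$. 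All of these steps are sound (the fiber dimension theorem applies since every fiber of $I\to\P(Q_m)$ is a linear space of the same codimension $\dim R_{d-m}$, and $B$ is closed either as a determinantal locus or as the image of $I$ under a proper projection). What your approach buys is self-containedness: the proposition no longer depends on \cite{ZW2}, at the cost of about a page of standard but necessary verification; what the paper's approach buys is brevity and an explicit pointer to where the generic injectivity of the catalecticant is established in the literature.
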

\proof
Since $A(f)$ is Artinian Gorenstein with socle degree $d$, it is enough to prove only the claim for $k \leq d/2$. This claim is equivalent to $\Ann(f)_k=0$ for $k \leq d/2$, and also to
$\dim J^k(f)_{d-k}=\dim Q_k$. This last equality is exactly the claim of \cite[Proposition 1.3]{ZW2},
where $J^k(f)_{d-k}$ is denoted by $E_k(f)$. The claim for the Fermat type polynomial follows from Example \ref{exFer}.
\endproof

\begin{cor} \label{cor2}
For any $k \leq d/2$ and any polynomial $f \in U_{d,k}$ one has  $$h_i(M^k(f))= {n+i \choose n}\text{ for } i<d-k$$ 
and
$$h_{d-k}(M^k(f))=\dim R_{d-k}-\dim Q_k={n+d-k \choose n}-{n+k \choose n}.$$
In particular, a Fermat type polynomial $f=x_0^d+ \ldots +x_n^d$ is not in $U_{d,k}$, for $d \geq 2k \geq 4$.
\end{cor}

In conclusion, the introduction of higher order Milnor algebras is motivated by the desire to construct a larger class of Artin graded algebras starting with homogeneous polynomials. These new Artinian algebras may exhibit interesting examples with respect to Lefschetz properties. It is known that the Hessians are related to Lefschetz properties, and the Hessians of singular hypersurfaces behave in a different way from the ones of smooth hypersurfaces. As an example, we have the following. Recall first that $\hess_f$ denotes the Hessian of
a homogeneous polynomial $f$ as in Definition \ref{defMH} or, more explicitly,
$$\hess_f= \det \left( \frac{\partial ^2 f}{\partial x_i \partial x_j}\right)_{0\leq i,j \leq n}.$$
\begin{prop} \label{P1.5}
Let $f$ be a homogeneous polynomial in $R$.
\begin{enumerate}

\item If the hypersurface $V(f)$ is smooth, then $\hess_f \notin J(f)$.

\item If the hypersurface $V(f)$ is not smooth, but has isolated singularities, then $\hess_f \in J(f)$.
\end{enumerate}

\end{prop}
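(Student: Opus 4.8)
The plan is to handle both parts through one observation: for every reduced $f$ of degree $d\ge 2$, the class of $\hess_f$ in $M(f)$ is a socle element living in the top degree $T:=(n+1)(d-2)=\deg\hess_f$, and the two cases differ only in whether this class can be nonzero. To record the socle membership, write $H=(f_{x_ix_j})$ for the Hessian matrix and apply Euler's identity to each $f_{x_i}$, which is homogeneous of degree $d-1$: this gives $H\mathbf{x}=(d-1)\nabla f$, where $\mathbf{x}=(x_0,\dots,x_n)^{t}$ and $\nabla f=(f_{x_0},\dots,f_{x_n})^{t}$. Multiplying on the left by the adjugate matrix and using $\mathrm{adj}(H)\,H=\hess_f\cdot I$ yields
$$\hess_f\cdot x_i=(d-1)\sum_{j}\mathrm{adj}(H)_{ij}\,f_{x_j}\in J(f),\qquad i=0,\dots,n,$$
so $\mathfrak{m}\cdot\hess_f\subset J(f)$ with $\mathfrak{m}=(x_0,\dots,x_n)$. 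Hence $\bar h:=\hess_f+J(f)\in M(f)_T$ is annihilated by $\mathfrak m$, i.e. $\bar h$ lies in the socle $(0:_{M(f)}\mathfrak m)_T\subseteq H^0_{\mathfrak m}(M(f))_T$.

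For part (1), smoothness of $V(f)$ forces $f_{x_0},\dots,f_{x_n}$ to be a regular sequence, so $M(f)$ is Artinian Gorenstein with one-dimensional socle concentrated in degree $T$. I would then invoke the classical description (Scheja--Storch; see also Eisenbud) of the socle of a graded Artinian complete intersection $R/(g_0,\dots,g_n)$ as the line generated by the Jacobian $\det(\partial g_i/\partial x_j)$; with $g_i=f_{x_i}$ this Jacobian is precisely $\hess_f$. Thus $\bar h$ generates the socle and in particular $\bar h\neq 0$, i.e. $\hess_f\notin J(f)$. One can confirm the normalization on the Fermat polynomial, but it is the structural statement that yields the conclusion for all smooth $f$, since $\bar h\neq0$ is not an open condition that one could spread out from a single example.

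For part (2), with $V(f)$ singular but carrying only isolated singularities, $M(f)$ is no longer Artinian (Theorem \ref{T1} with $k=1$) and has Krull dimension one, so its socle is contained in the finite-length module $N(f):=H^0_{\mathfrak m}(M(f))=\widehat{J(f)}/J(f)$, where $\widehat{J(f)}$ is the saturation. The essential input is the self-duality of this Jacobian module for hypersurfaces with isolated singularities (Sernesi; van Straten--Warmt; see also Dimca--Sticlaru), giving perfect pairings $N(f)_k\cong N(f)_{T-k}^{\vee}$ with symmetry centre exactly $T=(n+1)(d-2)$. Since $J(f)_0=0$ and $\widehat{J(f)}_0=H^0(\mathbb{P}^n,\mathcal{I}_\Sigma)=0$ for the nonempty singular scheme $\Sigma$, we get $N(f)_0=0$, whence duality forces $N(f)_T\cong N(f)_0^{\vee}=0$. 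As $\bar h\in N(f)_T=0$, we conclude $\hess_f\in J(f)$.

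The main obstacle is justifying the precise vanishing $N(f)_T=0$ in part (2): everything hinges on the symmetry centre of the self-duality being exactly $T$, so that the degree in which $\hess_f$ lives is paired with the trivially vanishing degree $0$. I would cross-check this centre against graded local duality, $N(f)_k^{*}\cong \mathrm{Ext}^{\,n+1}_R(M(f),R)_{-k-(n+1)}$, and verify on a worked example with $N(f)\neq0$ that the nontrivial part of $N(f)$ is symmetric about $T/2$; the smooth limit, where $N(f)=M(f)$ is Gorenstein of socle degree $T$, is the heuristic that pins the centre down. Part (1) is comparatively safe once the Scheja--Storch socle description is granted.
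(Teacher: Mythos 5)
Your argument is correct, and for part (2) it follows a genuinely different route from the paper's. For part (1) the two proofs are essentially the same: the paper cites \cite[Theorem 1, Section 5.11]{AGV} for the fact that the Hessian generates the (one-dimensional) socle of the Milnor algebra of an isolated hypersurface singularity, which is exactly the Scheja--Storch description you invoke, so your normalization worry is moot. For part (2) the paper simply quotes \cite[Proposition 1.4 (ii)]{DSt1}, the equality of Hilbert series of $M(f)$ and $H_{n+1}(f)=M(f)/(\overline{\hess_f})$, whereas you first establish the identity $\mathfrak{m}\cdot\hess_f\subset J(f)$ via the adjugate of the Hessian matrix and Euler's relation --- a clean elementary fact, valid for every $f$, that does not appear in the paper and that unifies the two parts as the single question of whether the degree-$T$ socle class $\overline{\hess_f}$ vanishes --- and then you kill that class using the self-duality $N(f)_k\cong N(f)_{T-k}^{\vee}$ of $N(f)=H^0_{\mathfrak m}(M(f))$ together with $N(f)_0=0$. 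The duality you need is indeed available with symmetry centre exactly $T=(n+1)(d-2)$: this is the van Straten--Warmt duality for one-dimensional almost complete intersections, used in this graded form in \cite{Se} and \cite{DP} (one can sanity-check it on the $E_6$-quartic $f_C=x_0^3x_1+x_2^4$ of Example \ref{ex3}, where $\dim N(f)_2=\dim N(f)_4=1$ and $N(f)_0=N(f)_6=0$), so the one gap you flag closes. The trade-off: your route is conceptually more self-contained and explains \emph{why} the Hessian must land in $J(f)$ in the singular case (its socle class has no room to live, by duality against degree $0$), at the price of importing the self-duality theorem; the paper's route is shorter but opaque, since all the work is hidden in \cite{DSt1}.
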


\proof The first claim is well known, and it holds in fact for any isolated hypersurface singularity, not only for the cone over $V(f)$, see Theorem 1, section 5.11 in \cite{AGV}.
The second claim is less known, and it follows from \cite[Proposition 1.4 (ii)]{DSt1}. Indeed, for the $n+1$-st Hessian algebra $H_{n+1}(f)$, one has the equalities
$$H_{n+1}(f)=\frac{R}{J(f)+(\hess_f)}=\frac{M(f)}{({\overline \hess_f})},$$
where $(\hess_f)$ is  the principal ideal in $R$ generated by the Hessian $\hess_f$ and $({\overline \hess_f})$ is  the principal ideal in $M(f)$ generated by the class $ {\overline \hess_f}$ of the
Hessian $\hess_f$ in $M(f)$.
 By \cite[Proposition 1.4 (ii)]{DSt1}, we know that the graded algebras $H_{n+1}(f)$ and $M(f)$ have the same Hilbert series when the hypersurface $V(f)$ is not smooth and has only isolated singularities. This proves our claim (2).
\endproof

\begin{question} \label{questionHESS}
Is is true that $\hess_f \in J(f)$ for any reduced, singular hypersurface $V(f)$?
\end{question}

\section{The Hilbert functions of $A(f)$ and $M^k(f)$, and the geometry of $V(f)$}

It is known that for two homogeneous polynomials $f, g \in R_d$, the corresponding Milnor algebras $M(f)$ and $M(g)$ are isomorphic as $\C$-algebras if and only if the associated hypersurfaces $V(f)$ and $V(g)$ in $\p^n$ are projectively equivalent. This claim follows from \cite{MY} when the hypersurfaces $V(f)$ and $V(f')$ are both smooth. However, the method of proof can be extended to cover all hypersurfaces. 
For a closely related result, see \cite{ZW1}.

Note that a similar claim fails if we replace the Milnor algebra $M(f)=M^1(f)$ by the second order Milnor algebra $M^2(f)$. Indeed, it is enough to consider the family of complex plane cubics $f_a=x_0^3+x_1^3+x_2^3-3ax_0x_1x_2$, where $a\ne0$, $a^3\ne 1$.
In this case $M^2({f_a})=R/(x_0,x_1,x_2)=\C$ does not detect the parameter $a$. However, the main result of \cite{ZW2} implies the following.

\begin{thm} \label{thmZW2}
The $k$-th order Milnor algebra $M^k(f)$ of a generic homogeneous polynomial $f$ determines the hypersurface $V(f)$ up-to projective equivalence, when $k \leq d/2-1$.
\end{thm}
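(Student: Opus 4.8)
The plan is to prove the equivalence hidden in the statement. The easy direction, that projective equivalence of $V(f)$ and $V(g)$ forces $M^k(f)\cong M^k(g)$, rests on the equivariance of the higher Jacobian ideal: a linear change of coordinates $\tilde\Phi\in GL(R_1)$ with $\tilde\Phi(f)=cg$ (some $c\in\C^*$) extends uniquely to a graded automorphism of $R$, and since $GL_{n+1}$ acts on the ring $Q$ of differential operators by the contragredient action, preserving each graded piece $Q_k$, one gets the identity $\tilde\Phi(Q_k\ast f)=Q_k\ast\tilde\Phi(f)$ and hence $\tilde\Phi(J^k(f))=J^k(\tilde\Phi(f))=J^k(g)$. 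This descends to the desired isomorphism. The content of the theorem is the converse, for generic $f$.

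So first I would fix a generic $f$, so that $V(f)$ is smooth and, by Theorem \ref{T1}, $M^k(f)$ is Artinian, and I would assume a graded $\C$-algebra isomorphism $\Phi\colon M^k(f)\to M^k(g)$. Because $k\le d/2-1$ together with $k\ge 1$ forces $d\ge 4$, the generators of $J^k(f)$ lie in degree $d-k\ge 3$, so $J^k(f)_1=0=J^k(g)_1$, giving $M^k(f)_1=R_1=M^k(g)_1$ and a linear isomorphism $\Phi_1\colon R_1\to R_1$. I would lift $\Phi_1$ to a graded automorphism $\tilde\Phi$ of the free algebra $R$. Writing $\pi_f,\pi_g$ for the two quotient maps, the algebra homomorphisms $\pi_g\circ\tilde\Phi$ and $\Phi\circ\pi_f$ from $R$ to $M^k(g)$ agree on the generators $x_i$, hence coincide; comparing kernels yields $\tilde\Phi^{-1}(J^k(g))=J^k(f)$, i.e. $\tilde\Phi(J^k(f))=J^k(g)$.

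Next I would read the equivariance identity in the form $\tilde\Phi(J^k(f))=J^k(\tilde\Phi(f))$ to deduce $J^k(\tilde\Phi(f))=J^k(g)$; passing to the degree $d-k$ where both ideals are generated, this says that the two spaces of order-$k$ partials coincide, $J^k(\tilde\Phi(f))_{d-k}=J^k(g)_{d-k}$ inside $R_{d-k}$. At this point the whole problem is reduced to a reconstruction statement: the subspace of order-$k$ partial derivatives of a generic form determines the form up to a nonzero constant. This is precisely the range $k\le d/2-1$ content of the main result of \cite{ZW2} (compare Proposition \ref{propZW}, which records the companion count $\dim J^k(f)_{d-k}=\dim Q_k$). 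Since $f$ is generic and genericity is $GL_{n+1}$-invariant, $\tilde\Phi(f)$ is again generic, so Wang's theorem gives $g=c\,\tilde\Phi(f)$ for some $c\in\C^*$. Hence $V(g)=V(\tilde\Phi(f))$ is the image of $V(f)$ under the projective transformation induced by $\tilde\Phi$, which is the asserted projective equivalence.

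The routine bookkeeping is the equivariance of $J^k$ under the $GL_{n+1}$-action and the reduction of the abstract isomorphism to an honest coordinate change (including, if $\Phi$ is only a $\C$-algebra map, the standard passage to the graded case through the maximal-ideal filtration $M^k(f)_1\cong\mathfrak m/\mathfrak m^2$). The genuine obstacle, however, is the reconstruction step, i.e. recovering a generic $f$ up to scalar from its space of $k$-th partials; this is the deep input of \cite{ZW2} and is exactly why the hypothesis $k\le d/2-1$, equivalently $2k\le d-2$, is imposed, so that the order-$k$ partials, living in degree $d-k>k$, retain enough information to integrate $f$ back.
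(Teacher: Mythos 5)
Your argument is correct and follows essentially the same route as the paper: reduce the abstract graded isomorphism to a genuine linear change of coordinates $\phi$ with $\phi(J^k(f))=J^k(g)$, rewrite this via equivariance as $J^k(f\circ\phi)=J^k(g)$, and then invoke Theorem 1.2 and Proposition 1.3 of \cite{ZW2} to recover the hypersurface. You merely spell out in more detail the lifting of the degree-one part of the isomorphism and the equivariance of $J^k$, steps the paper leaves implicit.
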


\proof
Let $f$ and $g$ be two generic, degree $d$ homogeneous polynomials in $R$, such that we have an isomorphism of graded algebras
$$M^k(f)=R/J^k(f) \simeq R/J^k(g)=M^k(g).$$
Then there is a linear change of coordinates $\phi \in Gl_{n+1}(\C)$, inducing the above isomorphism, and hence such that 
$\phi^*(J^k(f))=J^k(g)$. This last equality can be re-written as $J^k(f\circ \phi)=J^k(g)$, and Theorem 1.2 and Proposition 1.3 in  \cite{ZW2} imply that the two hypersurfaces $V(f\circ \phi)$ and $V(g)$ coincide.
\endproof

Saying that the associated hypersurfaces $V(f)$ and $V(g)$ in $\p^n$ are projectively equivalent means that the two polynomials $f$ and $g$ belongs to the same $G$-orbit, where $G=Gl_{n+1}(\C)$ is acting in the natural way on the space of polynomials $R_d$ by substitution. And the fact that $f$ and $g$ belongs to the same $G$-orbit implies immediately that the Milnor algebras $M(f)$ and $M(g)$ are isomorphic.
Similarly,  the fact that $f$ and $g$ belong to the same $G$-orbit implies immediately that the standard graded Artinian Gorenstein algebras $A(f)$ and $A(g)$ are isomorphic, see for instance \cite[Lemma 3.3]{DiPo}. In particular, the Hilbert function of $A(f)$ is determined by the $G$-orbit of $f$, and hopefully by the geometry of the corresponding hypersurface $V(f)$. However, the following example seems to suggest that it is a hard problem to relate the geometry of the hypersurface $V(f)$ to the properties of the algebras $A(f)$, $M(f)$ and $M^2(f)$.

\begin{ex} \label{ex3}\rm
In this example we look at quartic curves in $\p^2$, i.e. $(n,d)=(2,4)$.
When $V(f)$ is not a cone, only dimension $h_2(A(f))$ has to be determined. It turns out that all the possible values $ \{3,4,5,6\}$ are obtained. All the computations below were done using CoCoA software, see \cite{CoCoA}, with the help of Gabriel Sticlaru.

\medskip

\noindent{\bf Case $V(f)$ smooth}

\medskip

All the smooth quartics $V(f)$ have the same Hilbert function
$$H(M(f);t)=1+3t+6t^2+7t^3+6t^4+3t^5+t^6,$$
given by the formula \eqref{eq0}. But the other invariants may change, as the following examples show. 

\begin{enumerate}

\item When $f_F=x_0^4+x_1^4+x_2^4$, the Fermat type polynomial of degree $4$, the corresponding Hilbert function is 
$$H(A(f_F);t)=1+3t+3t^2+3t^3+t^4,$$
by Example \ref{exFer}. The minimal resolution of $A(f_F)$
is given by
$$0 \to  Q(-7) \to  Q(-3)^2 \oplus Q(-5)^3 \to  Q(-2)^3 \oplus Q(-4)^2 \to Q,$$
in particular $A(f_F)$ is not a complete intersection. The second order Milnor algebra is $M^2({f_F})=R/(x_0^2,x_1^2,x_2^2)$, hence a complete intersection, with Hilbert function
$$H(M^2({f_F});t)=1+3t+3t^2+t^3.$$

\item For the smooth Caporali quartic given by $f_{Ca}=x_0^4+x_1^4+x_2^4+(x_0+x_1+x_2)^4$, we get
$$H(A(f_{Ca});t)=1+3t+4t^2+3t^3+t^4,$$
and the minimal resolution of $A(f_{Ca})$
is given by
$$0 \to  Q(-7) \to  Q(-4) \oplus Q(-5)^2 \to  Q(-2)^2 \oplus Q(-3) \to Q.$$
Hence $A(f_{Ca})$ is a complete intersection of multi-degree $(2,2,3)$.
The second order Milnor algebra  $M^2({f_{Ca}})$ has a Hilbert function given by
$$H(M^2({f_{Ca}});t)=1+3t+2t^2,$$
in particular this algebra is not Gorenstein.

\item For the smooth quartic given by $f_{Ca_1}=x_0^4+x_1^4+x_2^4+(x_0^2+x_1^2+x_2^2)^2$, we get
$$H(Af_{Ca_1});t)=1+3t+6t^2+3t^3+t^4,$$
which coincides with the generic value given in Proposition \ref{propZW}, and the minimal resolution of $A(f_{Ca_1})$
is given by
$$0 \to  Q(-7) \to   Q(-4)^7 \to  Q(-3)^7  \to Q.$$
Hence $A(f_{Ca_1})$ is far from being a complete intersection.
The second order Milnor algebra $M^2({f_{Ca_1}})$ has a Hilbert function given by
$$H(M^2({f_{Ca_1}});t)=1+3t,$$
in particular this algebra is not Gorenstein.

\item For the smooth quartic given by $f_{Ca_2}=x_0^4+x_1^4+x_2^4+(x_0^2+x_1^2)^2$, we get the same Hilbert function as for $A(f_{Ca})$,
but the minimal resolution of $A(f_{Ca_2})$
is given by
$$0 \to  Q(-7) \to Q(-3)\oplus  Q(-4)^2 \oplus Q(-5)^2 \to  Q(-2)^2 \oplus Q(-3)^2\oplus Q(-4)  \to Q.$$
The second order Milnor algebra  $M^2({f_{Ca_2}})$ has a Hilbert function given by
$$H(M^2({f_{Ca}});t)=1+3t+2t^2,$$
and hence this algebra is again not Gorenstein.
\end{enumerate}

\medskip

\noindent{\bf Case $V(f)$ singular}

\medskip

\begin{enumerate}

\item The rational quartic with an $E_6$-singularity, defined by $f_C=x_0^3x_1+x_2^4$ satisfies
$H(A(f_{C});t)=H(A_{f_F};t)$ and the minimal resolution for $A_{f_{C}}$ is
$$0 \to  Q(-7) \to Q(-3)^2 \oplus Q(-5)^3 \to  Q(-2)^3 \oplus Q(-4)^2  \to Q.$$
Hence the algebra $A(f_{C})$ has the same resolution as a graded $R$-module as the algebra $A(f_{F})$. But these two algebras are not isomorphic. Indeed, note that
$$\Ann(f_F)_2=\langle X_0X_1,X_0X_2,X_1X_2\rangle \text{ and } \Ann(f_C)_2= \langle X_1^2,X_0X_2,X_1X_2\rangle.$$
An isomorphism $A(f_F) \simeq A(f_C)$ of $\C$-algebras would imply that the two nets of conics
$$N_F: aX_0X_1+bX_0X_2+c X_1X_2 \text{ and } N_C: aX_1^2+b X_0X_2+c X_1X_2$$
are equivalent. This is not the case, since a conic in $N_F$ is singular if and only if it belongs to the union of  three lines given by $abc=0$, while a conic in $N_C$ is singular if and only if it belongs to the union of two lines given by $ab=0$.
For the associated Milnor algebras, one has
$$H(M(f_C);t)=1+3t+6t^2+7t^3+7t^4+6 \frac{t^5}{1-t},$$
and
$$H(M^2(f_C);t)=1+3t+3t^2+2 \frac{t^3}{1-t}.$$
Hence $M^2(f_C)$ is not Artinian, as predicted by Theorem \ref{T1}.
Note that this curve has a unique $E_6$-singularity, with Tjurina numbers 
$\tau(E_6)=\tau^1(E_6)=6$ and $\tau^2(E_6)=2$, which explain the coefficients of the rational fractions in the above formulas, in view of Theorem \ref{T1}.
\item For $f_{3A_2}=x_0^2x_1^2+x_1^2x_2^2+x_0^2x_2^2-2x_0x_1x_2(x_0+x_1+x_2)$, which defines a quartic curve with 3 cusps $A_2$, a direct computation shows that
$$H(A(f_{3A_2});t)=1+3t+6t^2+3t^3+t^4,$$
which coincides with the generic value given in Proposition \ref{propZW}. The minimal resolution is
$$0 \to  Q(-7) \to   Q(-4)^7 \to  Q(-3)^7  \to Q.$$
Hence the algebra $A(f_{3A_2})$ has the same resolution as a graded $R$-module as the algebra $A(f_{Ca1})$. Does this imply that these two algebras are isomorphic? In this case $\Ann(f)_2=0$ and $\dim \Ann(f)_3=7$, hence it is more complicated to use the above method to distinguish these two algebras. Note also that the line arrangement $f=x_0x_1x_2(x_0+x_1+x_2)=0$ gives rise to an algebra $A(f)$ with exactly the same 
resolution as a graded $R$-module as the algebra $A(f_{Ca1})$.

For the associated Milnor algebras, one has
$$H(M(f_{3A_2});t)=1+3t+6t^2+7t^3+6 \frac{t^4}{1-t},$$
and
$$H(M^2(f_{3A_2});t)=1+3t.$$
Hence $M^2(f_{3A_2})$ is  Artinian, as predicted by Theorem \ref{T1}, but not Gorenstein.

\item For $f_{2A_3}=x_0^2x_1^2+x_2^4$, which defines a quartic curve with $2$ singularities $A_3$, a direct computation shows that
$$H(A(f_{2A_3});t)=1+3t+4t^2+3t^3+t^4$$
and the minimal resolution is
$$0 \to  Q(-7) \to Q(-3)\oplus  Q(-4)^2 \oplus Q(-5)^2 \to  Q(-2)^2 \oplus Q(-3)^2\oplus Q(-4)  \to Q.$$
Hence the algebra $A(f_{2A_3})$ has the same resolution as a graded $R$-module as the algebra $A(f_{Ca_2})$. Does this imply that these two algebras are isomorphic? Note that
$$\Ann(f_{2A_3})_2=\langle X_0X_2,X_1X_2\rangle =\Ann(f_{Ca_2})_2,$$
while $\Ann(f_{2A_3})_3$ and $\Ann(f_{Ca_2})_3$ have both dimension $7$.
Hence again the above method to distinguish these two algebras is not easy to apply. For the associated Milnor algebras, one has
$$H(M(f_{2A_3});t)=H(M(f_C);t)$$
and
$$H(M^2(f_{2A_3});t)=1+3t+2t^2.$$
Hence $M^2(f_C)$ is  Artinian, but not Gorenstein.

\item For $f_{4A_1}=(x_0^2+x_1^2)^2+(x_1^2+x_2^2)^2$, which defines a quartic curve with $4$ singularities $A_1$ that is the union of two conics intersecting in the $4$ nodes, a direct computation shows that
$$H(A(f_{4A_1});t)=1+3t+5t^2+3t^3+t^4$$
and the minimal resolution is
$$0 \to  Q(-7) \to  Q(-4)^4 \oplus Q(-5) \to  Q(-2) \oplus Q(-3)^4  \to Q.$$

\item For the line arrangement defined by $f=(x_0^3+x_1^3)x_2$, we get 
an algebra $A(f)$ with exactly the same resolution as a graded $R$-module as the algebra $A(f_{Ca})$, hence a complete intersection of multi-degree $(2,2,3)$. But these two algebras are not isomorphic. Indeed, note that
$$\Ann(f_{Ca})_2=\langle X_0X_1-X_1X_2,X_0X_2-X_1X_2\rangle \text{ and } \Ann(f)_2=\langle X_0X_1,X_2^2\rangle.$$
An isomorphism $A(f_{Ca}) \simeq A(f)$ of $\C$-algebras would imply that the two pencils of conics
$$P_{Ca}: a(y_0y_1-y_1y_2)+b(y_0y_2-y_1y_2) \text{ and } P_f: ay_0y_1+b y_2^2$$
are equivalent. This is not the case, since a conic in $P_{Ca}$ is singular if and only if it belongs to the union of three lines given by $ab(a+b)=0$, while a conic in $P_f$ is singular if and only if it belongs to the union of two lines given by $ab=0$.

For the associated Milnor algebras, one has
$$H(M(f_{4A_1});t)=1+3t+6t^2+7t^3+6t^4+4 \frac{t^5}{1-t},$$
and
$$H(M^2(f_{4A_1});t)=1+3t+t^2.$$
Hence $M^2(f_{4A_1})$ is  Artinian and Gorenstein.

\end{enumerate}

\end{ex}

\begin{ex} \label{ex4}\rm
We show here that for some smooth quartics $V(f)$ in $\p^2$ the multiplication by a generic linear form $\ell$ does not give rise to an injection $M^2(f)_1 \to M^2(f)_2$. Note that one has
$$\dim M^2(f)_1=\dim R_1=3 \text{ and } \dim M^2(f)_2=\dim R_2-\dim A(f)_{2}= 6-\dim A(f)_{2},$$
using the general formula 
$$\dim J^k_{d-k}=\dim A(f)_{k}.$$
Hence, as soon as $\dim A(f)_{2}\geq 4$, the morphism $M^2(f)_1 \to M^2(f)_2$ cannot be injective. This happens for all the smooth quartics 
in Example \ref{ex3}, except for the Fermat one.
\end{ex}

\begin{question} \label{questionMY} \rm

It would be interesting to find out whether Mather-Yau result extends to this setting, i.e. if an isomorphism
$A(f) \simeq A(f')$ of $\C$-algebras implies that $f$ and $f'$ belongs to the same $G$-orbit. In the case when $(n,d)=(1,4)$ or $(n,d)=(2,3)$, one has $T=(n+1)(d-2)=d$ and
the $G$-orbits of polynomials in $R_d$ corresponding to smooth hypersurfaces can be listed using 1-parameter families. In both cases, the correspondence $\Mac:f_t \to g_{u(t)}$ between a polynomial $f_t\in R_d$ with $V(f_t)$ smooth and a polynomial $g_{u(t)}=\Mac(f_t) \in R_d$, defined by the property that one has an isomorphism
$$M(f_t) =A_{g_{u(t)}},$$
give rise to a bijection $u$ of the corresponding parameter space, see \cite{DiPo} for details. Hence the Mather-Yau result implies a positive answer to our question of the algebras $A_f$ in these two special cases.
\end{question}

\section{Higher Jacobians and Higher Polar mappings}

Consider the following exact sequence 
$$0 \to I_k \to Q_k \to J^k_{d-k} \to 0,$$
where $I=\Ann(f)$ and $J^k=J^k(f)$.
The map $Q_k \to J^k_{d-k}$ is  given by evaluation $ \alpha \mapsto \alpha(f)$. \\
We have also the natural exact sequence:
$$0 \to I_k \to Q_k \to A_k \to 0.$$
Note that the vector spaces $J^k_{d-k}$ and $A_k$ have the same dimension, 
\begin{equation}\label{eq1}
\dim J^k_{d-k} = \dim Q_k -\dim I_k = \binom{n+k}{k} - \dim I_k = \dim A_k.
\end{equation}
\begin{defin}\rm
The $k$-th polar mapping (or $k$-th gradient mapping) of the hypersurface $X=V(f) \subset \P^n$ is the rational map  $\Phi^k_X:  \P^n  \dashrightarrow   \P^{\binom{n+k}{k}-1}$ given by the $k$-th partial derivatives of $f$.  
The $k$-th polar image of $X$ is  $\tilde{Z}_k = \overline{\Phi^k_X(\P^n)}\subseteq\p^{\binom{n+k}{k}-1}$, the closure of the image of the $k$-th polar map.
\end{defin}
For $\{\alpha_1, \ldots, \alpha_{a_k}\}$ a basis of the vector space $A_k$, we define the relative $k$-th polar map of $X$ to be the map $\varphi^k_X:\p^n \dashrightarrow \p^{a_k-1}$ given by the linear system $J^k_{d-k}$:
$$\  \varphi^k_X(p)=(\alpha_1(f)(p): \ldots : \alpha_{a_k}(f)(p) ).$$
The $k$-th relative polar image of $X$ is  $Z_k = \overline{\varphi^k_X(\P^n)}\subseteq \p^{a_k-1}$, the closure of the image of the relative $k$-th polar map.\\

It follows from Proposition \ref{propZW}, that for $k \leq d/2$ and $f$ generic, one has $a_k=\binom{n+k}{k}$ and hence $\Phi^k_X=\varphi^k_X$ in such cases.
In general, the exact sequence $$0 \to I_k \to Q_k \to J^k_{d-k} \to 0$$ gives rise to a linear projection $\P^{\binom{n+k}{k}-1} \dashrightarrow \p^{a_k-1}$ making compatible these two polar maps, as in the diagram
$$\begin{array}{ccc}
   \P^n   & \rightarrow &  \P^{\binom{n+k}{k}-1}\\
          &     \searrow            &   \downarrow \\ 
      &                 &   \p^{a_k-1}.
  \end{array}
$$
Moreover, since $\tilde{Z}_k \subset \P(J^k_{d-k}) = \P^{a_k - 1} \subset \P^{\binom{n+k}{k}-1}$, the secant variety of $\tilde{Z}_k$ does not intersect the projection center, hence $Z_k \simeq \tilde{Z}_k$.
The next result is a formalization of the intuitive idea that the mixed Hessian $\Hess_f^{(1,k)}$, introduced in Definition \ref{defMH}, is the Jacobian matrix of the gradient (or polar) map $\varphi^k$ of order $k$.

\begin{thm}\label{thm:polarrankhess}
 With the above notations, we get:
 $$\dim Z_k = \dim \tilde{Z}_k = \operatorname{rk}(\operatorname{Hess}^{(1,k)}_X)-1.$$
 In particular, the following conditions are equivalents:
 \begin{enumerate}
  \item[(i)] $\varphi^k$ is a degenerated map, that is, $\dim Z_k < n$;
  \item[(ii)] $\rk (\Hess^{(1,k)}_X) < n+1$;
  \item[(iii)] The map $\bullet L^{d-k-1}: A_1 \to A_{d-k}$ has not maximal rank for any $L \in A_1$.
 \end{enumerate}

\end{thm}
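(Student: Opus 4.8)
The plan is to reduce the whole statement to the identification of the mixed Hessian $\Hess^{(1,k)}_X$ with the Jacobian matrix of the relative polar map $\varphi^k_X$, and then to translate properties of this map into properties of multiplication maps in $A=A(f)$ by means of Theorem \ref{thm:generalization}. For the dimension formula, since the linear projection already provides an isomorphism $Z_k\simeq \tilde Z_k$, it is enough to compute $\dim Z_k$. I fix the basis $\ba_1=(X_0,\ldots,X_n)$ of $A_1=Q_1$ and a basis $\ba_k=(\alpha_1,\ldots,\alpha_{a_k})$ of $A_k$, so that $\varphi^k_X=(\alpha_1(f):\cdots:\alpha_{a_k}(f))$. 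The $(j,i)$ entry of the Jacobian matrix of $\varphi^k_X$ is $\partial_{x_j}(\alpha_i(f))=X_j\alpha_i(f)$, because $X_j=\partial/\partial x_j$ commutes with the constant coefficient operator $\alpha_i$; hence this Jacobian matrix is precisely $\Hess^{(1,k)}_X$, viewed as a matrix of polynomials. Using the standard description of the dimension of the closure of the image of a rational map to projective space as the generic rank of its Jacobian minus one, I obtain $\dim Z_k=\rk \Hess^{(1,k)}_X-1$, which is the asserted equality.

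\textbf{The equivalences.} The equivalence of (i) and (ii) is immediate from the dimension formula, as $\dim Z_k<n$ is the same as $\rk \Hess^{(1,k)}_X<n+1$. For (ii) $\Leftrightarrow$ (iii) I apply Theorem \ref{thm:generalization} to the map $\bullet L^{d-k-1}:A_1\to A_{d-k}$, whose matrix in the chosen bases is $M=(d-k-1)!\,\Hess^{((d-k)^*,1)}(f)(L^\perp)$, the dual mixed Hessian evaluated at $x=L^\perp$. Combining the identity $\rk \Hess^{(k^*,l)}=\rk \Hess^{(d-k,l)}$ with the transpose relation $\Hess^{(k,1)}=(\Hess^{(1,k)})^t$, the generic rank of the polynomial matrix $\Hess^{((d-k)^*,1)}$ equals $\rk \Hess^{(1,k)}_X$. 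Since the rank of a matrix of polynomials can only drop under evaluation and attains its generic value at a generic point, I get $\rk M\le \rk \Hess^{(1,k)}_X$ for every $L$, with equality for generic $L$. As $A$ is Gorenstein one has $\dim A_{d-k}=\dim A_k$, and this is $\ge n+1=\dim A_1$ in the unimodal range, so maximal rank of $\bullet L^{d-k-1}$ means rank $n+1$. Therefore, if $\rk \Hess^{(1,k)}_X<n+1$ then $\rk M<n+1$ for all $L$, giving (iii); and if $\rk \Hess^{(1,k)}_X=n+1$ then $\rk M=n+1$ for generic $L$, so the map has maximal rank for some $L$, i.e. $\neg$(iii). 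This yields (ii) $\Leftrightarrow$ (iii).

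\textbf{Main obstacle.} The delicate part is this last equivalence: one must match Theorem \ref{thm:generalization} correctly to the map $A_1\to A_{d-k}$, control the passage from the generic rank of the polynomial matrix $\Hess^{(1,k)}_X$ to the rank of the numerical matrix $M$ obtained by specializing $x=L^\perp$ for generic $L$, and verify the rank identities for the dual mixed Hessian. One also has to make sure that ``maximal rank'' is read as the full rank $n+1$; this is legitimate whenever $\dim A_{d-k}\ge n+1$, which holds in the unimodal situation and in particular for generic $f$, while a possible dip $\dim A_{d-k}<n+1$ for non-unimodal $A$ is the only configuration that would demand separate treatment.
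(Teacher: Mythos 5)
Your proposal is correct and follows essentially the same route as the paper: both identify $\Hess^{(1,k)}_X$ with the Jacobian matrix of $\varphi^k_X$ to get $\dim Z_k=\rk\Hess^{(1,k)}_X-1$, and both derive (ii) $\Leftrightarrow$ (iii) from Theorem \ref{thm:generalization}. Your write-up is in fact more explicit than the paper's on the specialization-of-rank argument at $L^{\perp}$ and on the reading of ``maximal rank'' as rank $n+1$, a point the paper leaves implicit.
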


\begin{proof}
If $p=[\mathbf v] \in \p^n$, then  $T_p\p^n=\mathbb C^{n+1}/<\mathbf v>$ is  the affine tangent space to $\p^n$ at $p$. Let
$\Hess^{(1,k)}_X(p)$ be the equivalence class of the mixed Hessian matrix of $X=V(f)$ evaluated at $\mathbf v$.
$\Hess^{(1,k)}_X(p)$ passes to the quotients and it induces the differential
of the map $\varphi^k_X$ at $p$:
\begin{equation}\label{eq:dfp}
(d\varphi^k_X)_p: T_p\p^n\to T_{\varphi^k_X(p)}\p^{a_k-1},
\end{equation}
whose image is exactly $T_{\varphi^k_X(p)}Z_k$, when $p$ is generic. From this we can
describe explicitly the projective tangent space to $Z_k$ at $\varphi^k_X(p)$, obtaining 
\begin{equation}\label{eq:TpZ}
T_{\varphi^k_X(p)}Z_k=\p(\Im(\Hess^{(1,k)}_X(v))\subseteq \p^{a_k-1}.
\end{equation}
Thus, there is an integer $\gamma_k \geq 0$ such that
\begin{equation}\label{eq:dimZdiff}
\dim Z_k = \operatorname{rk}(\operatorname{Hess}^{(1,k)}_X)-1=n-\gamma_k.
\end{equation}
The equivalence between $(i)$ and $(ii)$ is clear, since $\rk (\Hess^{(1,k)}_X) < n+1$ if and only if $\gamma_k > 0$ if and only if $\dim (Z_k) < n$. \\
The equivalence between $(ii)$ and $(iii)$ follows from Theorem \ref{thm:generalization}.
\end{proof}

Recall that for a standard graded Artinian Gorenstein  algebra, the injectivity of $\bullet L: A_i \to A_{i+1}$ for a certain $L \in A_1$ implies the injectivity of $\bullet L: A_j \to A_{j+1}$ for all $j < i$, see \cite[Proposition 2.1]{MMN}.
If $A(f)$ does not satisfy the WLP, then there is a minimal $k$ such that $\bullet L: A_k \to A_{k+1}$ is not injective for all $L \in A_1$. By Theorem \ref{thm:polarrankhess} the previous condition is equivalent to 
say that for all $j \leq k$ the matrix  $\Hess_f^{(1,j)}$ has full rank.
The following Corollary is a partial generalization of the Gordan-Noether Hessian criterion, recalled in Proposition \ref{prop:GNcriteria}, to the case of higher order Hessians.

\begin{cor} \label{cor:polar1}
Let $k\leq \lfloor  \frac{d}{2}\rfloor$ be the greatest integer such that $\bullet L: A_{k-1} \to A_k$ is injective for some $L \in A_1$. For each $j \leq k$, we get that $\varphi^j$ degenerated implies $\hess^j_f = 0$. 
\end{cor}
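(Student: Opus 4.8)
The plan is to prove the contrapositive of the implication: for each fixed $j\le k$, I will show that $\hess^j_f\neq 0$ forces the $j$-th polar map $\varphi^j$ to be non-degenerate. The whole argument rests on the factorization $L^{d-j-1}=L^{j-1}\cdot L^{d-2j}$ in $A=A(f)$, which splits the Lefschetz map controlling the polar map as
\[
A_1 \xrightarrow{\ \bullet L^{j-1}\ } A_j \xrightarrow{\ \bullet L^{d-2j}\ } A_{d-j}.
\]
The hypothesis $j\le k$ is designed precisely so that the first factor is injective.

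First I would record the two translations. By Theorem \ref{thm:polarrankhess}, part (iii), the map $\varphi^j$ is non-degenerate if and only if $\bullet L^{d-j-1}\colon A_1\to A_{d-j}$ has maximal rank for some $L\in A_1$. On the other hand, applying Theorem \ref{thm:generalization} to the map $\bullet L^{d-2j}\colon A_j\to A_{d-j}$ and using the identity $\rk\Hess^{(k^*,l)}=\rk\Hess^{(d-k,l)}$ together with Definition \ref{defMH}, exactly as in Corollary \ref{cor1}, gives that $\hess^j_f\neq 0$ is equivalent to $\bullet L^{d-2j}\colon A_j\to A_{d-j}$ being an isomorphism for generic $L$.

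Next I would establish the injectivity of the first factor. By the definition of $k$, there is some $L$ with $\bullet L\colon A_{k-1}\to A_k$ injective, and the downward propagation of injectivity recalled before the statement (from \cite{MMN}) then makes $\bullet L\colon A_{i}\to A_{i+1}$ injective for all $i\le k-1$, for this same $L$. Since the injectivity of $\bullet L^{j-1}\colon A_1\to A_j$ (for $j\le k$) is a Zariski-open condition on $L$, it holds for a generic $L$; in particular $\dim A_{d-j}=\dim A_j\ge \dim A_1=n+1$, so a maximal-rank map $A_1\to A_{d-j}$ is the same thing as an injective one. Now assume $\hess^j_f\neq 0$. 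Then, for generic $L$, the map $\bullet L^{d-2j}\colon A_j\to A_{d-j}$ is an isomorphism, while $\bullet L^{j-1}\colon A_1\to A_j$ is injective; being nonempty Zariski-open conditions on the irreducible space $A_1\cong\C^{n+1}$, they hold simultaneously for some $L$. For such $L$ the composite $\bullet L^{d-j-1}$ is an injection followed by an isomorphism, hence injective, i.e. of maximal rank; by the translation above, $\varphi^j$ is non-degenerate. This is the desired contrapositive.

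The main obstacle is conceptual rather than computational: the degeneracy of $\varphi^j$ is detected by the rectangular mixed Hessian $\Hess^{(1,j)}_X$, i.e. by the Lefschetz map out of $A_1$, whereas $\hess^j_f=\det\Hess^{(j,j)}_f$ records the square map out of $A_j$, and there is no direct comparison of their ranks. The factorization through $A_j$ links the two, but it transmits information only if passing from $A_1$ to $A_j$ loses nothing, which is exactly the injectivity of $\bullet L^{j-1}$ and is the single place where the bound $j\le k$ enters. The point requiring care is therefore the genericity bookkeeping of the third step: one must ensure that a single $L$ can be chosen to witness both the injectivity of $\bullet L^{j-1}$ and the isomorphism $\bullet L^{d-2j}$, which is why both are phrased as nonempty Zariski-open conditions on $A_1$.
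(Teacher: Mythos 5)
Your proof is correct and follows essentially the same route as the paper's: the factorization $\bullet L^{d-j-1} = (\bullet L^{d-2j})\circ(\bullet L^{j-1})$ through $A_j$, injectivity of the first factor from the hypothesis $j\le k$ via the downward propagation of injectivity, injectivity of the second factor from $\hess^j_f\neq 0$ via Theorem \ref{thm:generalization}, and then Theorem \ref{thm:polarrankhess}(iii). Your extra care about choosing a single generic $L$ witnessing both open conditions, and about why maximal rank means injectivity here, only makes explicit what the paper's terser argument leaves implicit.
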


\begin{proof}
 The result follows from the commutative diagram, by Theorem \ref{thm:generalization} and by Theorem \ref{thm:polarrankhess}.
$$\begin{array}{ccc}

A_j & \to & A_{d-j}\\
\uparrow & \nearrow & \\
A_1 &   

  \end{array}
$$
Indeed, for $j\leq k$, we have that $\bullet L^{j-1}:A_1 \to A_j$ is injective, by composition. If $\bullet L^{d-2j}: A_j \to A_{d-j}$ is injective, then $\bullet L^{d-j-1} : A_1 \to A_{d-j}$ is injective. 
In other words, if $\hess^j_f \neq 0$, then $\varphi^j$ is not degenerated.
 
 \end{proof}

The converse is not true, as one can see in the next example.

\begin{ex}\rm Let $f=xu^3+yu^2v+zuv^2+v^4\in \C[x,y,z,u,v]_4$ as in \cite[Example 3]{Go},  and let $A=Q/\Ann(f)$. The map $\bullet L: A_1\to A_2$ is injective for $L=u+v$. For $j=2$, we have $$\Hess^2_f=\begin{pmatrix} 
0&0&0&0&0&0&6&0\\
0&0&0&0&0&2&0&0\\
0&0&0&0&0&0&0&2\\
0&0&0&0&0&0&2&0\\
0&0&0&0&0&2&0&0\\
0&2&0&0&2&0&0&0\\
6&0&0&2&0&0&0&0\\
0&0&2&0&0&0&0&24
\end{pmatrix}$$ and $\hess^2_f=\det\Hess^2_f=0$. Calculating the $\Hess_f^{(1,2)}$, we get: $$\Hess_f^{(1,2)}=\begin{pmatrix}
0&0&0&6u&0\\
0&0&0&2v&2u\\
0&0&0&0&2v\\
0&0&0&2u&0\\
0&0&0&2v&2u\\
0&2u&2v&2y&2z\\
6u&2v&0&6x&2y\\
0&0&2u&2z&24v
\end{pmatrix}.$$   The $\rk(\Hess_f^{(1,2)})=5$, hence $\varphi^2$ is not degenerated, by Theorem \ref{thm:polarrankhess}. 
\end{ex}

\begin{cor} \label{corHess10}
 Let $f$ be a homogeneous form of degree $d$ and let $1<k<d-1$. Let $\varphi^k_f$ be the $k$-th polar map 
of $f$. If $\hess_f \neq 0$ then $\varphi^k_f$ is not degenerated, that is $\dim Z_k = n$.
In particular, if $X = V(f)\subset \P^n$ with $n \leq 3$ is not a cone, then $\varphi^k_X$ is not degenerated.  
\end{cor}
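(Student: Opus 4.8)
The plan is to prove Corollary \ref{corHess10} by reducing it to the already-established machinery of Corollary \ref{cor:polar1} and Theorem \ref{thm:polarrankhess}, using the nondegeneracy hypothesis $\hess_f \neq 0$ as the crucial leverage. The key observation is that $\hess_f \neq 0$ means, by Corollary \ref{cor1}(2), that the multiplication map $\bullet L^{d-2}: A_1 \to A_{d-1}$ is an isomorphism for generic $L \in A_1$. First I would extract from this the injectivity of $\bullet L: A_1 \to A_2$ for generic $L$: indeed, if $\bullet L^{d-2}: A_1 \to A_{d-1}$ is an isomorphism, then it factors as $A_1 \xrightarrow{\bullet L} A_2 \xrightarrow{\bullet L^{d-3}} A_{d-1}$, so the first arrow $\bullet L: A_1 \to A_2$ must be injective. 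By the standard fact recalled before Corollary \ref{cor:polar1} (the result of \cite[Proposition 2.1]{MMN} that injectivity of $\bullet L$ propagates downward), this places us in the situation where the integer $k$ of Corollary \ref{cor:polar1} is at least $1$.

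The heart of the argument is to show that $\hess_f \neq 0$ forces $\hess^k_f \neq 0$ for every intermediate $k$ with $1 < k < d-1$, which by Theorem \ref{thm:polarrankhess} (equivalence of (i) and (ii)) immediately yields that $\varphi^k_f$ is not degenerated. The natural route is through the factorization diagram already used in the proof of Corollary \ref{cor:polar1}: for generic $L$, the isomorphism $\bullet L^{d-2}: A_1 \to A_{d-1}$ is compatible with the Lefschetz maps at intermediate levels. Concretely, I would argue that the nonvanishing of $\hess_f$ propagates to the symmetric pairing $\bullet L^{d-2k}: A_k \to A_{d-k}$ via the commutative triangle relating $A_1 \to A_k \to A_{d-k}$, exploiting that $A(f)$ is Gorenstein so the pairing $A_k \times A_{d-k} \to A_d$ is perfect. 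The role of the hypothesis $n \leq 3$ in the final sentence is then to invoke Theorem \ref{thm:GN}: when $X = V(f) \subset \P^n$ is not a cone and $n \leq 3$, the Gordan--Noether theorem guarantees $\hess_f \neq 0$ automatically, so the hypothesis of the first part is satisfied without extra assumption, and the conclusion follows.

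The main obstacle I anticipate is rigorously establishing the implication $\hess_f \neq 0 \Rightarrow \hess^k_f \neq 0$ for the intermediate orders $k$. The subtlety is that $A(f)$ here is the Artinian Gorenstein algebra $A=Q/\Ann(f)$ of socle degree $d$, and a priori $A$ need not satisfy the full SLP, so one cannot simply invoke SLP to get all the higher Hessians nonzero at once. What saves the argument is that we only need a single intermediate Lefschetz map to factor through the known isomorphism at the extreme level $(1, d-1)$: the composition $A_1 \xrightarrow{\bullet L^{k-1}} A_k \xrightarrow{\bullet L^{d-2k}} A_{d-k} \xrightarrow{\bullet L^{k-1}} A_{d-1}$ equals $\bullet L^{d-2}$, which is injective; hence the middle map $\bullet L^{d-2k}: A_k \to A_{d-k}$ composed on the right with an injection from its source gives injectivity of $\bullet L^{d-k-1}: A_1 \to A_{d-k}$, and by Theorem \ref{thm:polarrankhess} (equivalence of (ii) and (iii)) this is exactly the statement that $\rk(\Hess^{(1,k)}_X) = n+1$, i.e. $\varphi^k$ is nondegenerate. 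This circumvents needing the full symmetric Lefschetz isomorphism and instead uses only the injectivity of $\bullet L^{d-2}$ together with the characterization in Theorem \ref{thm:polarrankhess}.

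For the final claim I would then conclude: since for $n \leq 3$ a non-cone hypersurface satisfies $\hess_f \neq 0$ by Theorem \ref{thm:GN} (the Gordan--Noether theorem, as recorded in Proposition \ref{prop:GNcriteria}(ii) and Theorem \ref{thm:GN}), the general statement applies directly to give $\dim Z_k = n$ for all $1 < k < d-1$.
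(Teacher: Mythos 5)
Your proposal is correct and follows essentially the same route as the paper's own proof: both deduce from $\hess_f\neq 0$ that $\bullet L^{d-2}\colon A_1\to A_{d-1}$ is an isomorphism for generic $L$, factor this map through $\bullet L^{d-k-1}\colon A_1\to A_{d-k}$ to get injectivity of the latter, and then invoke Theorem \ref{thm:polarrankhess} (via Theorem \ref{thm:generalization}) to conclude that $\Hess_f^{(1,k)}$ has maximal rank, with Gordan--Noether handling the $n\le 3$ case. Your third paragraph correctly identifies that the stronger intermediate claim $\hess_f\neq 0\Rightarrow\hess^k_f\neq 0$ is neither needed nor available, which is exactly the shortcut the paper takes.
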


\begin{proof}
 By Theorem \ref{thm:polarrankhess} we have to prove that $rk(\Hess^{(1,k)}_f)=n+1$ is maximal. Let $L\in A_1$ be a general linear form. 
 Since $\hess_f \neq 0$, the multiplication map $\bullet L^{d-2}: A_1 \to A_{d-1}$ is an isomorphism. Indeed, by Theorem \ref{thm:generalization}, after choosing basis the matrix of $\bullet L^{d-2}$
 is $(d-2)!\Hess^{((d-1)^*,1)}$ whose rank is the same as the rank of $\Hess_f$. Since $\bullet L^{d-2}: A_1 \to A_{d-1}$ factors via $\bullet L^{d-k-1}: A_1 \to A_{d-k}$, the injectivity of $\bullet L^{d-2}$ implies 
 the injectivity of $\bullet L^{d-k-1}$. On the other hand the rank of $\Hess^{((d-k)^*,1)}$ is equal to the rank of $\Hess_f^{(1,k)}$. The result now follows from Gordan-Noether theory.
\end{proof}

\begin{ex}\rm Let $X = V(f)\subset \P^3$ be Ikeda's surface given by $f=xuv^3+yu^3v+x^2y^3$ as in \cite{Ik, MW}. Since $X$ is not a cone, by the Gordan-Noether criterion, $\hess_f \neq 0$. On the other hand, $\hess^2_f=0$.
By Corollary \ref{corHess10}, we know that the second polar map is not degenerated. Moreover, $\varphi_X = \Phi_X:\P^3 \dashrightarrow \P^9$ since $\dim A_2 = 10$. 
From an algebraic viewpoint we have $A_1 \to A_2 \to A_3$ and we know that $\bullet L: A_2 \to A_3$ is not an isomorfism, in particular it has non trivial kernel. On the other hand, $\bullet L^2: A_1 \to A_3$ is injective by Theorem \ref{thm:polarrankhess}. 
So the image of the first multiplication does not intersect the kernel of the second one. 
\end{ex}

Consider the $k$-th polar map $\Phi^k_X:\p^n \to \p^N$ associated to a smooth hypersurface $X=V(f)$, where $N={n+k \choose k}-1$.
Corollary \ref{corHess10} implies that the image of this map has dimension $n$ for any $k$ with
$1<k<d-1$. There is a related result: consider the restriction
\begin{equation} \label{eqPol}
\psi^k_X=\Phi^k_X|X:X \to \p^N.
\end{equation}
Then it is shown in \cite{D} that the map $\psi^k_X$ is finite for $1\leq k<d$. The case $k=2$ is stated in \cite[Proposition 1.6]{D} and the general case in  \cite[Remark 2.3 (ii)]{D}. We prove next that the map
$\Phi^k_X$ is finite as well, which implies that $\dim Z_k=n$. Note that this map $\Phi^k_X$ is well defined as soon as $M^k(f)$ is Artinian.

\begin{thm} \label{T2}
Let $\Phi^k_X:\p^n \to \p^N$ be the $k$-th polar map  associated to a hypersurface $X=V(f)\subset \p^n$ such that $M^k(f)$ is Artinian, where $N={n+k \choose k}-1$ and $0<k<d$. Then $\Phi^k_X$ is finite. 
In particular, $$\dim Z_k=\dim \Phi^k_X(\p^n)=n$$
and one has $$\deg \Phi^k_X \cdot \deg \Phi^k_X(\p^n)=(d-k)^n.$$
\end{thm}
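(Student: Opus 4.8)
The plan is to derive every assertion from a single fact, namely that $\Phi^k_X$ is a finite morphism, and to obtain that finiteness from the ampleness of the pulled-back hyperplane class. First I would record that, since $M^k(f)=R/J^k(f)$ is Artinian, the homogeneous ideal $J^k(f)$ contains a power of the irrelevant maximal ideal, so its projective zero locus $V(J^k(f))\subset\p^n$ is empty. This zero locus is precisely the base locus of the $k$-th polar map, so the forms of degree $d-k$ generating $J^k_{d-k}$ have no common zero, and hence $\Phi^k_X\colon\p^n\to\p^N$ is a genuine morphism defined on all of $\p^n$. In particular $(\Phi^k_X)^*\OO_{\p^N}(1)=\OO_{\p^n}(d-k)$.

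The heart of the argument is a standard finiteness criterion. Since $\p^n$ is projective, $\Phi^k_X$ is proper, and a proper morphism with finite fibres is finite; so it suffices to rule out positive-dimensional fibres. Suppose $F\subset\p^n$ were an irreducible positive-dimensional subvariety contracted by $\Phi^k_X$ to a single point. Then $(\Phi^k_X)^*\OO_{\p^N}(1)|_F$ would be trivial. But $(\Phi^k_X)^*\OO_{\p^N}(1)=\OO_{\p^n}(d-k)$ is ample, as $d-k\geq 1$, so its restriction to the positive-dimensional projective variety $F$ is again ample; since an ample line bundle on a positive-dimensional projective variety cannot be trivial, this is a contradiction. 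Hence every fibre is finite and $\Phi^k_X$ is finite onto its image $\tilde Z_k=\overline{\Phi^k_X(\p^n)}$. This is the step I expect to be the main obstacle, even though it is formal; everything afterwards is bookkeeping.

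Finiteness immediately yields the dimension statement: a finite morphism is closed with finite fibres, so $\Phi^k_X(\p^n)$ is closed, equal to $\tilde Z_k$, and preserves dimension, giving $\dim\tilde Z_k=\dim\p^n=n$. Combined with the identification $Z_k\simeq\tilde Z_k$ established just before the statement, this gives $\dim Z_k=\dim\Phi^k_X(\p^n)=n$, and exhibits $\Phi^k_X\colon\p^n\to\tilde Z_k$ as a finite surjective morphism of varieties of the same dimension $n$.

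For the degree formula I would apply the projection formula to this finite surjective morphism. Writing $H$ for the hyperplane class on $\p^N$ and $h$ for the hyperplane class on $\p^n$, one has $(\Phi^k_X)^*H=(d-k)\,h$, so
\[
\deg\Phi^k_X\cdot\deg\tilde Z_k=\deg\Phi^k_X\cdot\int_{\tilde Z_k}H^{\,n}=\int_{\p^n}\big((\Phi^k_X)^*H\big)^{n}=(d-k)^n\int_{\p^n}h^{\,n}=(d-k)^n,
\]
using $\int_{\p^n}h^{\,n}=1$. Since $\deg\tilde Z_k=\deg\Phi^k_X(\p^n)$, this is exactly $\deg\Phi^k_X\cdot\deg\Phi^k_X(\p^n)=(d-k)^n$, completing the proof.
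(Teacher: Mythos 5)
Your proof is correct, and its skeleton matches the paper's: both arguments come down to the observation that, because $M^k(f)$ is Artinian, the degree-$(d-k)$ forms $\partial^\alpha f$ have no common zero, so no positive-dimensional subvariety can be contracted. The difference is in packaging. The paper argues by hand: if a curve $C$ were contracted to a point with some coordinate $b_\beta\neq 0$, the hypersurface $\partial^\beta f=0$ would still have to meet $C$ at some $p$, forcing all $\partial^\alpha f(p)=0$ and contradicting Artinian-ness; the degree formula is then obtained by cutting the image with $n$ generic hyperplanes and counting the intersection of $n$ hypersurfaces of degree $d-k$. You instead observe that the base locus is empty, so $\Phi^k_X$ is a morphism with $(\Phi^k_X)^*\OO_{\p^N}(1)=\OO_{\p^n}(d-k)$ ample, rule out contracted fibres because an ample line bundle on a positive-dimensional projective variety is nontrivial, invoke ``proper with finite fibres implies finite,'' and get the degree by the projection formula. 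Your route buys a cleaner, coordinate-free statement and makes explicit that $\Phi^k_X$ is everywhere defined (which the paper only remarks on in passing), at the cost of appealing to heavier general machinery (Zariski's main theorem for the properness criterion); the paper's version is self-contained and elementary, using only that a hypersurface meets every curve in $\p^n$ --- which is, of course, the ampleness of $\OO(1)$ in disguise. Both degree computations are the same intersection-theoretic count expressed in different languages.
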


\proof Suppose there is a curve $C$ in $\p^n$ such that $\Phi^k_X$ is constant on $C$, say
$$\Phi^k_f(x)=(b_{\alpha})_{|\alpha|=k} \in \p^N,$$
for any $x \in C$. There is at least one multi-index $\beta$ such that $b_{\beta}\ne 0$. But this leads to a contradiction, since either the partial derivative 
$\partial^{\beta}f$ is identically zero, or the hypersurface $\partial^{\beta}f=0$ meets the curve $C$, say at a point $p$. At such a point $p$, all the partial derivatives $\partial^{\alpha}f(p)=0$, for $|\alpha|=k$, in contradiction to the fact that $M^k(f)$ is Artinian.
The claim about the degree follows by cutting
$\Phi^k_X(\p^n)$ with $n$ generic hyperplanes $H_j$ in $\p^N$, where $j=1,...,n$ and using the fact that $(\Phi^k_X)^{-1}(H_j)$ is a family of $n$ hypersurfaces of degree $d-k$ meeting in $\deg \Phi^k_X \cdot \deg \Phi^k_X(\p^n)$ simple points.
\endproof

{\bf Acknowledgments}. The authors would like to thank the organizers and the participants of the workshop {\it Lefschetz Properties and Jordan Type in Algebra, Geometry and Combinatorics}, that took place in Levico Terme, Trento, Italy in June 2018. 
This work was started at this  conference. \\
The second author would like to thank Francesco Russo and Giuseppe Zappalà  for many conversations on this subject.

\end{document}